\documentclass[12pt]{amsart}
\usepackage{a4wide,enumerate,graphicx}
\allowdisplaybreaks

\let\pa\partial  
  
\let\eps\varepsilon  
\newcommand{\N}{{\mathbb N}}  
\newcommand{\R}{{\mathbb R}}

\newtheorem{theorem}{Theorem}

%%%%%%%%%%%%%%%%%%%%%%%%%%%%%%%%%%%%%%%%%%%%%%%%%%%%%%%%%%%%%%%%%%%%%%%%%%
 
\begin{document}  

\title[A kinetic equation modeling irrationality and herding]{A kinetic equation 
for economic value estimation with irrationality and herding}

\author{Bertram D\"uring}
\address{Department of Mathematics, University of Sussex, Pevensey II, 
Brighton BN1 9QH, United Kingdom}
\email{b.during@sussex.ac.uk} 
\author{Ansgar J\"ungel}
\address{Institute for Analysis and Scientific Computing, Vienna University of  
	Technology, Wiedner Hauptstra\ss e 8--10, 1040 Wien, Austria}
\email{juengel@tuwien.ac.at} 
\author{Lara Trussardi}
\address{Institute for Analysis and Scientific Computing, Vienna University of  
	Technology, Wiedner Hauptstra\ss e 8--10, 1040 Wien, Austria}
\email{lara.trussardi@tuwien.ac.at}

\date{\today}

\thanks{The first author is supported by the Leverhulme Trust research project 
grant ``Novel discretisations for higher-order nonlinear PDE'' (RPG-2015-69).
The second and third authors acknowledge partial support from   
the European Union in the FP7-PEOPLE-2012-ITN Program under 
Grant Agreement Number 304617 and from
the Austrian Science Fund (FWF), grants P22108, P24304, and W1245} 

\begin{abstract}
A kinetic inhomogeneous Boltzmann-type equation is proposed to model the
dynamics of the number of agents in a large market depending on the estimated
value of an asset and the rationality of the agents. The interaction rules
take into account the interplay of the agents with sources of public information, 
herding phenomena, and irrationality of the individuals.
In the formal grazing collision limit, a nonlinear nonlocal Fokker-Planck
equation with anisotropic (or incomplete) diffusion is derived. The existence
of global-in-time weak solutions to the Fokker-Planck initial-boundary-value
problem is proved. Numerical experiments for the Boltzmann equation
highlight the importance of the reliability of public information in the
formation of bubbles and crashes. The use of Bollinger bands in the simulations
shows how herding may lead to strong trends with low volatility of the asset prices,
but eventually also to abrupt corrections.
\end{abstract}

%\paragraph{Keywords:}  
\keywords{Inhomogeneous Boltzmann equation, public information, herding,
Fokker-Planck equation, existence of solutions, sociophysics.}  
 
%\paragraph{AMS classification:}  
\subjclass[2000]{35Q20, 35Q84, 91G80, 35K65}  

\dedicatory{Dedicated to Peter A. Markowich on the occasion
of his $60$th birthday}

\maketitle

%%%%%%%%%%%%%%%%%%%%%%%%%%%%%%%%%%%%%%%%%%%%%%%%%%%%%%%%%%%%%%%%%%%%%%%%%%%%%%%

\section{Introduction}\label{sec.int}

\noindent Herding behavior and the formation of speculative bubbles (and subsequent crashes)
are observed in many financial and commodity markets. There are many historical
examples, from the so-called Dutch tulip bulb mania in 1637 to the recent
credit crunch in the US housing market in 2007. Despite the obvious importance of 
these phenomena, herding behavior and bubble formation were investigated
in the scientific literature only in the last two decades. 
The aim of this paper is to propose and investigate 
a kinetic model describing irrationality and herding of market agents,
motivated by the works of Toscani \cite{Tos06} and Delitala and Lorenzi \cite{DeLo14}.

Herding in economic markets is characterized by a homogenization of the actions
of the market participants, which behave at a certain time in the same way.
Herding may lead to strong trends with low volatility of asset prices, but
eventually also to abrupt corrections, so it promotes the occurence of
bubbles and crashes. Numerous socio-economic papers 
\cite{Ban92,Bru01,DJH09,RCF09,Roo06} and research in biological sciences
\cite{ARN05,Ham71} show that herding interactions play a crucial role in social
scenarios. Herding behavior is driven by emotions and usually occurs because 
of the social pressure of conformity. Another cause is the appeal to belief
that it is unlikey that a large number of people could be wrong. A market
participant might follow the herd in spite of another opinion. This phenomenon
is known as an information cascade \cite{BHW92}. 

While most approaches to herding in the literature are based on agent models,
our approach uses techniques from kinetic theory, similar to
opinion-formation models \cite{BS09,DMPW09,Tos06}. These methods employ ideas
from statistical mechanics to describe the behavior of a large number of
individuals in a society \cite{PaTo14}. Binary collisions between gas molecules are
replaced by interactions of market individuals, and the phase-space variables
are interpreted as socio-economic variables, in our case: the rationality $x\in\R$ 
and the estimated asset value $w\in\R^+:=[0,\infty)$, assigned to the asset
by an individual. When $x>0$, we say that the agent behaves
rational, otherwise irrational. We refer to the review \cite{DeWe96}
for a discussion of rational herding models.

Denoting by $f(x,w,t)$ the distribution of the agents at time $t\ge 0$,
its time evolution is given by the inhomogenous Boltzmann-type equation
\begin{equation}\label{1.be}
  \pa_t f + (\Phi(x,w)f)_x = \widetilde Q_I(f) + \widetilde Q_H(f,f),
  \quad (x,w)\in\R\times\R^+,\ t>0,
\end{equation}
with the boundary condition $f=0$ at $w=0$ and initial condition $f=f_0$ at $t=0$.
The first term on the right-hand side describes an interaction 
that is soley based on economic fundamentals. After the interaction, 
the individuals change their estimated asset value influenced by sources of public
information such as financial reports, balance sheet numbers, etc. 
The second term describes binary interactions of the agents modeling the 
exchange of information and possibly leading to herding and imitation phenomena. 

When the asset value lies within a certain range around the ``fair'' prize, 
determined by fundamentals, 
the agents may suffer from psychological biases like overconfidence and 
limited attention \cite{Hir01}, and we assume that they behave more irrational.
This means that the drift field $\Phi(x,w)$ is negative in that range.
When the asset value becomes too low or too large compared to
the ``fair'' prize the asset values are believed
to be driven by speculation. We assume that the market agents recognize
this fact at a certain point and are becoming more rational. 
In this case, the drift field $\Phi(x,w)$ is positive.
We expect that the estimated asset value will in average be not too far from the
``fair'' price, and we confirm this expectation by computing the moment
of $f(x,w,t)$ with respect to $w$ in Section \ref{sec.be}.
For details on the modeling, we refer to Section \ref{sec.mod}. 

Our setting is influenced by the models investigated by Toscani \cite{Tos06} and
Delitala and Lorenzi \cite{DeLo14}. Toscani \cite{Tos06} described the
interaction of individuals in the context of opinion formation.
Our modeling of public information and herding
is similar to \cite{Tos06}. The idea to include public information and herding
is due to \cite{DeLo14}. In contrast to \cite{DeLo14}, we allow
for the drift field $\Phi(x,w)$, leading to the inhomogeneous Boltzmann-type equation
\eqref{1.be}. Such equations were also studied in \cite{DuWo15} 
but using a different drift field.
The relationship of rational herd behavior and asset values was investigated
in \cite{AvZe98} but no dynamics were analyzed.
The novelty of the present work is the combination of dynamics, transport, 
public information, and herding.

Our main results are as follows.
We derive formally in the grazing collision limit (as in \cite{Tos06})
the nonlinear Fokker-Planck equation
\begin{align}\label{1.eq}
  & \pa_t g + (\Phi(x,w)g)_x = (K[g]g)_w + (H(w)g)_w + (D(w)g)_{ww}, \\
  & g(x,0,t) = 0, \quad g(x,w,0) = g_0(x,w), 
	\quad (x,w)\in \R\times\R^+,\ t>0.
  \label{1.bic}
\end{align}
Here, $K[g]$ is a nonlocal operator
related to the attitude of the agents to change their mind
because of herding mechanisms, $H(w)$ is an average of the compromise propensity,
and $D(w)$ models diffusion, which can be interpreted as a self-thinking process,
and satisfies $D(0)=0$. Again, we refer to Section \ref{sec.mod} for details.
A different herding diffusion model in the context of crowd motion was derived and
analyzed in \cite{BMP11}. Other kinetic and macroscopic crowd models were
considered in \cite{DAMT13}.

Equation \eqref{1.eq} is nonlinear, nonlocal, degenerate in $w$, 
and anisotropic in $x$ (incomplete diffusion). It is well known that
partial diffusion may lead to singularity formation \cite{HPW13},
and often the existence of solutions can be shown only in the class of
very weak or entropy solutions \cite{AmNa03,EVZ94}. 
Our situation is better than in \cite{AmNa03,EVZ94}, since the transport in $x$
is linear. Exploiting the linear structure, we prove the existence of global
{\em weak} solutions to \eqref{1.eq}-\eqref{1.bic}. However, we need the
assumption that $D(w)$ is strictly positive to get rid of the degeneracy in $w$.
Unfortunately, our estimates depend on $\inf D(w)$ and become useless when
$D(0)=0$. 
%The existence analysis is our second main result; see Section \ref{sec.anal}.

Finally, we present some numerical experiments for the inhomogeneous Boltzmann-type
equation \eqref{1.be} using a splitting scheme. 
The collisional part (i.e.\ \eqref{1.be} with $\Phi=0$)
is approximated using the interaction rules and a modified Bird scheme. The transport
part (i.e.\ \eqref{1.be} with $\widetilde Q_I=\widetilde Q_H=0$) is discretized
using a combination of an upwind and Lax-Wendroff scheme.
The numerical experiments 
highlight the importance of the reliability of public information in the
formation of bubbles and crashes. The use of Bollinger bands in the simulations
shows how herding may lead to strong trends with low volatility of the asset prices,
but eventually also to abrupt corrections.

The paper is organized as follows. In Section \ref{sec.mod}, the kinetic
model is detailed and the grazing collision limit is performed. The resulting
Fokker-Planck model \eqref{1.eq}-\eqref{1.bic} is analyzed in Section \ref{sec.anal}.
Furthermore, we discuss the time evolution of the moments of $g(x,w,t)$ 
in some specific examples.
The numerical results are presented in Section \ref{sec.num}.

%%%%%%%%%%%%%%%%%%%%%%%%%%%%%%%%%%%%%%%%%%%%%%%%%%%%%%%%%%%%%%%%%%%%%%%%%%%%%%%

\section{Modeling}\label{sec.mod}

\noindent The aim of this section is to model the evolution of the distribution of
the number of agents in a large market using a kinetic approach.

\subsection{Public information and herding}\label{sec.pih}

We describe the behavior of the market agents by means of microscopic interactions
among the agents. The state of the market is assumed to be characterized by
two continuous variables: the estimated asset value $w\in\R^+:=[0,\infty)$ and
the rationality $x\in\R$. We say that the agent has a rational behavior
if $x>0$ and an irrational behavior if $x<0$. The changes in asset valuation
are based on binary interactions. We take into account two different types:
the interaction with public sources, which characterizes a rational agent,
and the effect of herding, characterizing an irrational agent.
In the following, we define the corresponding interaction rules.

Let $w$ be the estimated asset value of an arbitrary agent before the interaction
and $w^*$ the asset value after exchanging information with the public source.
Given the background $W=W(t)$, which may be interpreted as a ``fair'' value,
the interaction is given, similarly as in \cite{CDT09}, by
\begin{equation}\label{2.inter1}
   w^* = w - \alpha P(|w-W|)(w-W)+\eta d(w).
\end{equation}
The function $P$ measures the compromise propensity and takes values in $[0,1]$,
and the parameter $\alpha>0$ is a measure of the strength of this effect. 
Furthermore, the function $d$ with values in $[0,1]$
describes the modification of the asset value due to diffusion, and
$\eta$ is a random variable with distribution $\mu$ with
variance $\sigma_I^2$ and zero mean taking values on $\R$, i.e.\
$\langle w\rangle=\int_\R wd\mu(w)=0$ and $\langle w^2\rangle 
= \int_\R w^2 d\mu(w)=\sigma_I^2$.
An example for $P$ is \cite{Tos06}
$$
  P(|w-W|) = 1_{\{|w-W|<r\}}, 
$$
where $r>0$ and $1_A$
denotes the characteristic function on the set $A$. Thus, if
the estimated asset value is too far from the value available from public
sources (the ``fair'' value), 
the effect of public information will be discarded (selective perception). 
The idea behind \eqref{2.inter1} is that if a market agent trusts an information
source, she will update her estimated asset value to make it closer to the one
suggested by the public information. We expect that a rational investor
follows such a strategy.

The interaction rule \eqref{2.inter1} has to ensure that the post-interaction 
value $w^*$ remains in the interval $\R^+$.
We have to require that diffusion vanishes at the border $w=0$,
i.e.\ $d(0)=0$. In the absence of diffusion, it follows that
$w^*=w-\alpha P(|w-W|)(w-W)\ge w-\alpha(w-W) = (1-\alpha)w + \alpha W\ge 0$ if $w>W$
and $w^*=w+\alpha P(|w-W|)(W-w)\ge w\ge 0$ if $w\le W$. Therefore,
the post-interaction value $w^*$ stays in the domain $\R^+$. 

The second interaction rule aims to model the effect of herding, i.e., we take
into account the interaction between a market agent and other investors.
We suggest the interaction rule, similarly as in \cite{Tos06},
\begin{equation}\label{2.inter2}
\begin{aligned}
  w^* &= w - \beta\gamma(v,w)(w-v) + \eta_1 d(w), \\
	v^* &= v - \beta\gamma(v,w)(v-w) + \eta_2 d(v).
\end{aligned}
\end{equation}
The pairs $(w,v)$ and $(w^*,v^*)$ denote the asset values of two arbitrary 
agents before and after the interaction, respectively. In \eqref{2.inter2}, 
$\beta\in(0,1/2]$ is a constant which measures the attitude of the market 
participants to change their mind because of herding mechanisms. Furthermore,
$\eta_1$, $\eta_2$ are random variables, modeling diffusion effects, 
with the same distribution with variance $\sigma_H^2$ and zero mean,
and, to simplify, the function $d$ is the same as in \eqref{2.inter1}.
The function $\gamma$ with values in $[0,1]$ describes a socio-economic 
scenario where individuals are highly confident in the asset.
An example, taken from \cite{DeLo14}, reads as
\begin{equation}\label{2.gamma}
  \gamma(v,w) = 1_{\{w<v\}}v f(w), 
\end{equation}
where $f$ is nonincreasing, $f(0)=1$, and $\lim_{w\to\infty}f(w)=0$.
If an agent has an asset value $w$ smaller than $v$, the function $\gamma$ 
will push this
agent to assume a higher value $w^*$ than that one before the interaction.
This means that the agent trusts other agents that assign a higher value.
If $w$ is larger than $v$, the agent hesitates to lower his asset value
and nothing changes. Agents that assign a small value $w$ tend to herd with
a higher rate, i.e.\ $f$ is nonincreasing. Another choice is given by
$\gamma(v,w)=1_{\{|w-v|<r_H\}}$ \cite{DeLo14}. In this case, the
interaction occurs only when the two interacting agents have asset values
which are not too different from each other.

The interaction does not take place if $w^*$, $v^*$ are negative. 
In the absence of diffusion, adding both equations in \eqref{2.inter2} gives
$w^*+v^*=v+w$ which means that the total momentum is conserved. Subtracting
both equations in \eqref{2.inter2} yields $w^*-v^*=(1-2\beta\gamma(v,w))(w-v)$.
Since $1-2\beta\gamma(v,w)\in[0,1)$ (observe that $0<\beta\le 1/2$),
the post-interaction difference $w^*-v^*$ in the asset values is smaller than the
pre-interaction difference $w-v$. We infer that $w^*$, $v^*$ remain nonnegative.

When diffusion is taken into account, we need to specify the range of values
the random variables $\eta_1$, $\eta_2$ in \eqref{2.inter2} can assume. This clearly
depends on the choice of $d(w)$, and we refer to \cite[page 3691]{DMPW09}
for a more detailed discussion.

%%%%%%%%%%%%%%%

\subsection{The kinetic equation}\label{sec.be}

Instead of calculating the value $x$ and $w$ for each market agent, we prefer
to investigate the evolution of the distribution $f(x,w,t)$ of the estimated value
and the rationality of the market participants.
The integral $\int_B f(x,w,t)dz$ with $z=(x,w)$ represents the number of agents
with asset value and rationality in $B\subset\R\times\R^+$ at time $t\ge 0$.
In analogy with classical kinetic theory of rarefied gases, we may identify
the position variable with the rationality and the velocity with the asset value.
Using standard methods of kinetic theory, $f(x,w,t)$ evolves according to the
inhomogeneous Boltzmann equation
\begin{equation}\label{2.be}
  \pa_t f + (\Phi(x,w)f)_x = \frac{1}{\tau_I}Q_I(f) + \frac{1}{\tau_H}Q_H(f,f), 
	\quad (x,w)\in\R\times\R^+,\ t>0.
\end{equation} 
Here, $\Phi(x,w)$ is the drift term, $Q_I$ and $Q_H$ are interaction integrals
modeling the public information and herding, respectively, and 
$1/\tau_I>0$, $1/\tau_H>0$ describe the interaction
frequencies. This equation is supplemented by the boundary condition
$f(x,0,t)=0$ (nobody believes that the asset has value zero) and the initial
condition $f(x,w,0)=f_0(x,w)$ for $(x,w)\in\R\times\R^+$.

A simple model for $\Phi$ can be introduced as follows. If an agent gives an asset
value that is much larger than the ``fair'' value $W$,
she will recognize that the value is overestimated and it is believed that she
will become more rational. The same holds true when the estimated value is too
low compared to $W$. In this regime, the drift function $\Phi(x,w)$ should be
positive since agents drift towards higher rationality $x>0$.
When the estimated value is not too far from the value $W$, agents may
behave more irrational and drift towards the region $x<0$, so the drift 
function is negative. An example for such a function is 
\begin{equation}\label{2.Phi}
  \Phi(x,w) = \left\{\begin{array}{ll}
	-\delta\kappa &\quad\mbox{for }|w-W|<R, \\
	\kappa &\quad\mbox{for }|w-W|\ge R,
	\end{array}\right.
\end{equation}
where $\delta$, $\kappa$, $R>0$. The constant $R$ fixes the range $|w-W|<R$ 
in which bubbles and crashes do not occur. More realistic models are
obtained when $R$ depends on time, and we consider such a case in Section 
\ref{sec.num}). An alternative is to employ the mean asset value 
$\int_\R\int_{\R^+}fwdwdx$ instead of $w$ in $|w-W|<R$ to distinguish the ranges.

Next, we detail the choice of the interaction integrals. 
As pointed out in \cite{CDT09}, the existence of a pre-interaction pair which returns 
the post-interaction pair $(w^*,v^*)$ through an interaction of the type 
\eqref{2.inter1} is not guaranteed, because of the boundary constraint.
Therefore, we will give the interaction rule in the weak form. 
Let $\phi(w):=\phi(x,w)$ be a regular test function 
and set $\Omega=\R\times\R^+$, $z=(x,w)$. 
The weak form reads as
\begin{equation}\label{2.QI}
  \int_\Omega Q_I(f)\phi(w)dz
	= \left\langle\int_{\R^+}\int_\Omega\big(\phi(w^*)-\phi(w)\big)M(W)f(x,w,t)
	dzdW\right\rangle,
\end{equation}
where $\langle\cdot\rangle$ is the expectation value with respect to the
random variable $\eta$ in \eqref{2.inter1} and $M(W)\ge 0$ represents the
fixed background satisfying $\int_{\R^+}M(W)dW=1$. 
The Boltzmann equation for this operator, $\pa_t f=Q_I(f)/\tau_I$, becomes
in the weak form
$$
  \pa_t\int_\Omega f(x,w,t)\phi(w)dz 
	= \frac{1}{\tau_I}
	\bigg\langle\int_{\R^+}\int_\Omega\big(\phi(w^*)-\phi(w)\big)M(W)f(x,w,t)
	dzdW\bigg\rangle.
$$
Choosing $\phi(w)=1$, the right-hand side vanishes, which expresses
conservation of the number of agents:
$$
  \pa_t\int_\Omega f(x,w,t)dz = 0.
$$
The choice $\phi(w)=w$ gives the time evolution of the mean asset value
$m_w(f)=\int_\Omega fwdz$:
$$
  \pa_t m_w(f) = \frac{1}{\tau_I}\bigg\langle\int_{\R^+}\int_\Omega
	(w^*-w)M(W)f(x,w,t)dzdW\bigg\rangle 
	= -\alpha\int_\Omega H(w)f(x,w,t)dz,
$$
where
\begin{equation}\label{2.H}
  H(w) = \frac{1}{\tau_I}\int_{R^+}P(|w-W|)(w-W)M(W)dW.
\end{equation}
For instance, if $P=1$ and denoting by $\rho:=\int_\Omega fdz$ the (conserved)
number of agents, we obtain $H(w)=\tau_I^{-1}(w-\overline M)$, where
$\overline M:=\int_0^\infty WM(W)dW$, and 
$$
  \pa_t m_w(f) = -\frac{\alpha}{\tau_I}
	\int_\Omega(w-\overline M)f(x,w,t)dz 
	= -\frac{\alpha}{\tau_I} m_w(f) + \frac{\alpha}{\tau_I}\rho\overline M.
$$
This shows that the mean asset value converges exponentially fast to the mean value
of the background as $t\to\infty$.

The operator $Q_H(f,f)$ models the binary interaction of the agents and,
similary as in \cite{Tos06}, we define
\begin{equation}\label{2.QH}
  \int_\Omega Q_H(f,f)\phi(w)dz
	= \bigg\langle\int_{\R^+}\int_\Omega\big(\phi(w^*)-\phi(w)\big)f(x,w,t)f(x,v,t)
	dzdv\bigg\rangle,
\end{equation}
where $(w,v)$ is the pre-interaction pair that generates via \eqref{2.inter2}
the post-interaction pair $(w^*,v^*)$. Choosing $\phi=1$ in the Boltzmann
equation $\pa_t f=Q_H(f,f)$, we see that this operator also conserves the number
of agents. Taking $\phi(w)=w$ and using a symmetry argument, 
the interaction rule \eqref{2.inter2},
and the fact that the random variables $\eta_1$ and $\eta_2$ have zero mean, 
we find that
\begin{align*}
  \pa_t m_w(f) 
	&= \frac{1}{\tau_H}\bigg\langle\int_{\R^+}\int_\Omega(w^*-w)f(x,w,t)f(x,v,t)
	dzdv\bigg\rangle \\
	&= \frac{1}{2\tau_H}\bigg\langle\int_{\R^+}\int_\Omega(v^*+w^*-v-w)f(x,w,t)f(x,v,t)
	dzdv\bigg\rangle \\
	&=  \frac{1}{2\tau_H}\bigg\langle\int_{\R^+}\int_\Omega
	\big(\eta_1 d(w)+\eta_2 d(v)\big)f(x,w,t)f(x,v,t)dzdv\bigg\rangle = 0,
\end{align*}
We infer that herding conserves the mean asset value. This is reasonable as
the crowd may tend to any direction depending on the herding.

%%%%%%%%%%%%%%%%%%%%%%%%

\subsection{Grazing collision limit}\label{sec.lim}

The analysis of the Boltzmann equation \eqref{2.be} is rather involved, and
it is common in kinetic theory to investigate certain asymptotics leading
to simplified models of Fokker-Planck type. Our aim is to perform the formal
limit $(\alpha,\beta,\sigma_H^2,\sigma_I^2)\to 0$ 
(in a certain sense made precise below), where $\alpha$, $\beta$ appear in
the interaction rules \eqref{2.inter1} and \eqref{2.inter2} and 
$\sigma_H^2$, $\sigma_I^2$ are the variances of the random variables in these 
rules. The limit can be made rigorous using the techniques of \cite{CPT05,Tos06},
but we prefer to consider the formal limit only. In the following, we proceed along
the lines of \cite{CPT05,Tos06}.

Set $k=\beta/\alpha$, $t_s=\alpha t$, $x_s=\alpha x$, and introduce the
functions $g(x_s,w,t_s)=f(x,w,t)$ $\Phi_s(x_s,w)=\Phi(x,w)$. 
After the change of variables $(x,w)\mapsto (x_s,t_s)$ and setting $z_s=(x_s,w)$, 
the weak form of \eqref{2.be} reads as
\begin{align}
  \frac{\pa}{\pa t_s} & \int_\Omega g(x_s,w,t_s)\phi(w)dz_s
	+ \int_\Omega\frac{\pa}{\pa x_s}\big(\Phi_s(x_s,w)g(x_s,w,t_s)\big)\phi(w)dz_s 
	\nonumber \\
	&= \frac{1}{\alpha\tau_I}\int_\Omega Q_{I,s}(g)\phi(w)dz_s
	+ \frac{1}{\alpha\tau_H}\int_\Omega Q_{H,s}(g,g)\phi(w)dz_s, \label{2.aux}
\end{align}
where $Q_{I,s}(g)=Q_I(f)$, $Q_{H,s}(g,g)=Q_H(f,f)$ are defined in weak form in
\eqref{2.QI}, \eqref{2.QH}, respectively. In the following, we omit the
index $s$. 

Before performing the formal grazing collision limit,
we rewrite the first term on the right-hand side of \eqref{2.aux}.
By a Taylor expansion and the interaction rule \eqref{2.inter1}, we can write
\begin{align*}
  \phi(w^*)-\phi(w) &= \phi'(w)(w^*-w) + \frac12\phi''(\widetilde w)(w^*-w)^2 \\
	&= \phi'(w)\big(-\alpha P(|w-W|)(w-W) + \eta d(w)\big) \\
	&\phantom{xx}{}
	+ \frac{1}{2}\phi''(\widetilde w)\big(-\alpha P(|w-W|)(w-W) + \eta d(w)\big)^2,
\end{align*}
where $\widetilde w=\theta w^* + (1-\theta)w$ for some $\theta\in[0,1]$.
Inserting this expression into \eqref{2.QI}, observing that
$\langle\eta\rangle=0$, $\langle\eta^2\rangle=\sigma_I^2$, and taking into 
account definition \eqref{2.H} for $H(w)$, it follows that
\begin{align*}
  \frac{1}{\alpha\tau_I} & \int_\Omega Q_I(g)\phi(w)dz
	= -\frac{1}{\tau_I}\int_{\R^+}\int_\Omega\phi'(w)P(|w-W|)(w-W)M(W)g(x,w,t)
	dzdW \\
	&\phantom{xx}{}
	+ \frac{1}{2\tau_I}\int_{\R^+}\int_\Omega \phi''(\widetilde w)\bigg(\alpha P(|w-W|)^2
	(w-W)^2 + \frac{\sigma_I^2}{\alpha}d(w)^2\bigg)	M(W)g(x,w,t)dzdW \\
	&= -\frac{1}{\tau_I}\int_\Omega\phi'(w)H(w)g(x,w,t)dz + R(\alpha,\sigma_I) \\
	&\phantom{xx}{}
	+ \frac{1}{2\tau_I}\int_{\R^+}\int_\Omega \phi''(w)\bigg(\alpha P(|w-W|)^2
	(w-W)^2 + \frac{\sigma_I^2}{\alpha}d(w)^2\bigg)	M(W)g(x,w,t)dzdW,
\end{align*}
where
\begin{align*}
  R(\alpha,\sigma_I) 
	&= \frac{1}{2\tau_I}\int_{\R^+}\int_\Omega \big(\phi''(\widetilde w)-\phi''(w)\big)
	\bigg(\alpha P(|w-W|)^2(w-W)^2 
	+ \frac{\sigma_I^2}{\alpha}d(w)^2\bigg)	\\
	&\phantom{xx}{}\times M(W)g(x,w,t)dzdW.
\end{align*}
We wish to pass to the limit $\alpha\to 0$ and $\sigma_I\to 0$ such that
$\lambda_I:=\sigma_I^2/\alpha$ is fixed. It is proved in \cite[Section~4.1]{Tos06}
that $R(\alpha,\sigma_I)\to 0$ as $(\alpha,\sigma_I)\to 0$. Then
\begin{align*}
  \lim_{(\alpha,\sigma_I)\to 0} & \frac{1}{\alpha\tau_I} \int_\Omega Q_I(g)\phi(w)dz
	= \frac{1}{\tau_I}\int_\Omega\bigg(-\phi'(w)H(w)
	+ \frac{\lambda_I}{2}d(w)^2\phi''(w)\bigg)g(x,w,t)dz \\
	&= \int_\Omega\bigg((H(w)g)_w + \frac{\lambda_I}{2\tau_I}(d(w)^2g)_{ww}
	\bigg)\phi(w)dz,
\end{align*}
where in the last step we integrated by parts. The boundary integrals vanish
since $g=0$ at $w=0$ and $d(0)=0$ imply that 
$(d(w)^2g)_w|_{w=0} = d'(0)g|_{w=0}+d(0)g_w|_{w=0}=0$.

The limit $(\alpha,\sigma_H)\to 0$ in the last term of \eqref{2.aux} is performed
in a similar way. Using a Taylor expansion and \eqref{2.inter2}, we have
(with a different $\widetilde w$ as above)
\begin{align*}
  \phi(w^*)-\phi(w) &= \phi'(w)\big(-\alpha k\gamma(v,w)(v-w)+\eta_1 d(w)\big) \\
	&\phantom{xx}{}+ \frac12\phi''(\widetilde w)
	\big(-\alpha k\gamma(v,w)(v-w)+\eta_1 d(w)\big)^2.
\end{align*}
We insert this expansion into \eqref{2.QH} and find that
\begin{align*}
  \frac{1}{\alpha\tau_H} & \int_\Omega Q_H(g,g)dz
  = \frac{1}{\tau_H}\int_{\R^+}\int_\Omega\bigg(-k\gamma(v,w)(v-w)\phi'(w) \\
	&\phantom{xx}{}+ \frac12\bigg(\alpha k^2\gamma(v,w)^2(v-w)^2 
	+ \frac{\sigma_H^2}{\alpha}d(w)^2\bigg)\phi''(w)\bigg)g(x,v,t)g(x,w,t)dzdv \\
	&= \int_\Omega\bigg(-K[g](x,w,t)\phi'(w)
	+ \frac{\sigma_H^2}{2\alpha\tau_H}d(w)^2\rho\phi''(w)\bigg)g(x,w,t)dz \\
	&\phantom{xx}{}+ \frac{\alpha k^2}{2\tau_H}\int_{\R^+}\int_\Omega 
	\gamma(v,w)^2(v-w)^2 g(x,v,t)g(x,w,t)\phi''(w)dzdw,
\end{align*}
where we recall that $\rho=\int_\Omega fdz$ and we have set
$$
  K[g](x,w,t) = \frac{k}{\tau_H}\int_0^\infty \gamma(v,w)(v.w)g(x,v,t)dv.
$$
Keeping $\lambda_H=\sigma_H^2/\alpha$ fixed, the limit $(\alpha,\sigma_H)\to 0$ 
leads to
\begin{align*}
  \lim_{(\alpha,\sigma_H)\to 0} \frac{1}{\alpha\tau_H}\int_\Omega Q_H(g,g)dz
	&= \int_\Omega\bigg(-K[g](x,w,t)\phi'(w)
	+ \frac{\lambda_H}{2\tau_H} d(w)^2\rho\phi''(w)\bigg)g(x,w,t)dz \\
	&= \int_\Omega\big((K[g]g)_w 
	+ \frac{\lambda_H\rho}{2\tau_H} (d(w)^2g)_{ww}\big)\phi(w) dz.
\end{align*}

Therefore, the limit $(\alpha,\sigma_I,\sigma_H)\to 0$ in \eqref{2.aux} gives
\begin{align*}
  \int_\Omega\big(\pa_t g + (\Phi(x,w)g)_x\big)\phi dz
	&= \int_\Omega\bigg((H(w)g)_w + (K[g]g)_w \\
	&\phantom{xx}{}	+ \frac12\bigg(\frac{\lambda_I}{\tau_I}
	+ \frac{\lambda_H\rho}{\tau_H}\bigg)(d(w)^2g)_{ww}\bigg)\phi dz.
\end{align*}
Since $\phi$ is an arbitrary test function, this is the weak form of the
Fokker-Planck-type equation
\begin{equation}\label{2.fp}
  \pa_t g + (\Phi(x,w)g)_x = (K[g]g + H(w))_w + \frac12\bigg(\frac{\lambda_I}{\tau_I}
	+ \frac{\lambda_H\rho}{\tau_H}\bigg)(d(w)^2g)_{ww}
\end{equation}
for $(x,w)\in\R\times\R^+$, $t>0$. This equation is supplemented by the
boundary condition $g=0$ at $w=0$ and the initial condition $g(0)=g_0$ in
$\Omega$.

%%%%%%%%%%%%%%%%%%%%%%%%%%%%%%%%%%%%%%%%%%%%%%%%%%%%%%%%%%%%%%%%%%%%%%%%%%%%%%%

\section{Analysis}\label{sec.anal}

\noindent The aim of this section is to analyze the Fokker-Planck-type equation
derived in the previous section. To this end, we set
$$
  \Gamma(v,w) := \frac{k}{\tau_H}\gamma(v,w)(v-w), \quad
	D(w) := \frac12\bigg(\frac{\lambda_I}{\tau_I}	
	+ \frac{\lambda_H\rho}{\tau_H}\bigg)d(w)^2.
$$
Then \eqref{2.fp} simplifies to
\begin{equation}\label{2.eq}
  \pa_t g + (\Phi(x,w)g)_x = (K[g]g + H(w)g)_w + (D(w)g)_{ww}, \quad
	K[g] = \int_0^\infty\Gamma(v,w)g(v)dv.
\end{equation}

\subsection{Existence of weak solutions}\label{sec.ex}

We wish to show the existence of weak solutions to \eqref{1.bic}, \eqref{2.eq}
under the following hypotheses:

\begin{description}
\item[H1] $\Phi\in W^{2,\infty}(\Omega)$, $H\in W^{1,\infty}(\R^+)$,
$D\in W^{2,\infty}(\R^+)$, and there exists $\delta>0$ such that 
$D(w)\ge\delta>0$ for $w\in(0,\infty)$.
\item[H2] $\Gamma\in L^2((\R^+)^2)$, $\Gamma\ge 0$,  
%$\sup_{w>0}\int_0^\infty\Gamma(v,w)dv<\infty$, 
and $\Gamma_w(v,w)\le 0$ for all $v$, $w\ge 0$.
\item[H3] $g_0\in H^1(0,\infty)$ and $0\le g_0\le M_0$ for some $M_0>0$.
\end{description}

Then the main result reads as follows.

\begin{theorem}\label{thm.ex}
Let Hypotheses H1-H3 hold. Then there exists a weak solution $g$ to
\eqref{1.eq}-\eqref{1.bic} satisfying $0\le g(x,w,t)\le M_0e^{\lambda t}$
for $(x,w)\in\Omega$, $t>0$, where $\lambda>0$ depends on $\Phi$, $H$ and $D$,
and it holds $g\in L^2(0,T;H^1(\Omega))$, $\pa_t g\in L^2(0,T;H^1(\Omega)')$.
\end{theorem}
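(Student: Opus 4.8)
The plan is to construct the solution by a vanishing-viscosity regularisation in $x$ together with a Banach fixed-point argument for the nonlocal coefficient $K[g]$, the crucial point being a priori estimates that exploit the \emph{linear} structure of the $x$-transport. Fix $T>0$, put $\lambda:=\|\Phi_x\|_{L^\infty}+\|H_w\|_{L^\infty}+\|D_{ww}\|_{L^\infty}$ and $V:=\{v\in H^1(\Omega):v|_{w=0}=0\}$, and let $\mathcal S:=\{h\in C([0,T];L^2(\Omega)):h(\cdot,\cdot,0)=g_0,\ 0\le h\le M_0e^{\lambda t}\}$. For $\bar g\in\mathcal S$ the coefficient $K[\bar g](x,w,t)=\int_0^\infty\Gamma(v,w)\bar g(x,v,t)\,dv$ satisfies $\|K[\bar g]\|_{L^2(\Omega)}\le\|\Gamma\|_{L^2((\R^+)^2)}\|\bar g\|_{L^2(\Omega)}$ by Cauchy--Schwarz and, because $\bar g\ge0$ and $\Gamma_w\le0$ (H2), also $K[\bar g]_w\le0$. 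For $\eps>0$ the linear problem $\pa_t g+(\Phi g)_x=\eps g_{xx}+(K[\bar g]g+Hg)_w+(Dg)_{ww}$ with $g|_{w=0}=0$, $g(\cdot,\cdot,0)=g_0$ is uniformly parabolic (using $D\ge\delta>0$) and is solved by standard parabolic theory (Galerkin approximation and energy estimates), giving $g=:\mathcal T_\eps\bar g$ in $L^2(0,T;V)\cap C([0,T];L^2(\Omega))$ with $\pa_t g\in L^2(0,T;V')$. A Stampacchia truncation (testing with $(g)_-$, then with $(g-M_0e^{\lambda t})_+$) shows $0\le\mathcal T_\eps\bar g\le M_0e^{\lambda t}$: in non-divergence form the zeroth-order coefficient is $c=-\Phi_x+K[\bar g]_w+H_w+D_{ww}\le\lambda$ since $K[\bar g]_w\le0$, which is exactly why $\lambda$ depends only on $\Phi$, $H$, $D$. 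Hence $\mathcal T_\eps$ maps $\mathcal S$ into itself. For $\bar g_1,\bar g_2\in\mathcal S$ the difference $\psi=\mathcal T_\eps\bar g_1-\mathcal T_\eps\bar g_2$ solves a linear equation whose only source is $(K[\bar g_1-\bar g_2]\mathcal T_\eps\bar g_2)_w$; testing with $\psi$, using $\psi(0)=0$, $K[\bar g_1]_w\le0$, the bound $D\ge\delta$ to absorb $\|\psi_w\|^2$, and $\|K[\bar g_1-\bar g_2]\mathcal T_\eps\bar g_2\|_{L^2(\Omega)}\le M_0e^{\lambda T}\|\Gamma\|_{L^2}\|\bar g_1-\bar g_2\|_{L^2(\Omega)}$, a Gronwall argument makes $\mathcal T_\eps$ a contraction on $\mathcal S|_{[0,T_0]}$ for some $T_0>0$ depending only on the data (not on the solution); iterating over $[0,T]$ yields a solution $g_\eps$ of the nonlinear $\eps$-regularised problem with $0\le g_\eps\le M_0e^{\lambda t}$.

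Next I would derive a priori estimates uniform in $\eps$. Testing the equation with $g_\eps$ and integrating by parts in $w$ (the $w=0$ boundary terms vanish since $g_\eps\in V$, those at $w=+\infty$ by $H^1$-decay): the viscous term gives $-\eps\|g_{\eps,x}\|^2\le0$, the $x$-transport $\tfrac12\int\Phi_xg_\eps^2$, the $w$-drift $\tfrac12\int(K[g_\eps]_w+H_w)g_\eps^2$ with $K[g_\eps]_w\le0$, and the diffusion $-\int Dg_{\eps,w}^2+\tfrac12\int D_{ww}g_\eps^2\le-\delta\|g_{\eps,w}\|^2+\tfrac12\|D_{ww}\|_\infty\|g_\eps\|^2$; Gronwall bounds $g_\eps$ in $L^\infty(0,T;L^2(\Omega))\cap L^2(0,T;H^1_w(\Omega))$ uniformly in $\eps$. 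Regularity in $x$ is the delicate point because there is no $x$-diffusion: differentiating the equation in $x$ (via difference quotients) and testing with $g_{\eps,x}\in V$, and using $(\Phi g_\eps)_{xx}=(\Phi g_{\eps,x})_x+(\Phi_xg_\eps)_x$ together with $(K[g_\eps]g_\eps)_x=K[g_{\eps,x}]g_\eps+K[g_\eps]g_{\eps,x}$, the only genuinely new terms are $\int\Phi_{xx}g_\eps g_{\eps,x}$ (here $\Phi\in W^{2,\infty}$ enters) and $-\int K[g_{\eps,x}]g_\eps g_{\eps,xw}$, which are absorbed into lower order and into $\tfrac\delta2\|g_{\eps,xw}\|^2+C\|g_\eps\|_\infty^2\|\Gamma\|_{L^2}^2\|g_{\eps,x}\|^2$, respectively; with $\|g_\eps\|_\infty\le M_0e^{\lambda T}$ Gronwall gives $g_\eps$ bounded in $L^\infty(0,T;H^1(\Omega))$ uniformly in $\eps$, and then the equation itself bounds $\pa_t g_\eps$ in $L^2(0,T;V')$.

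To pass to the limit $\eps\to0$, weak-$*$ convergence suffices for all terms except the product $K[g_\eps]g_\eps$, for which I would use Aubin--Lions compactness on bounded $x$-slabs $(-R,R)\times\R^+$ together with the fact that $K$ is bounded $L^2\to L^2$. The restriction to bounded $x$-domains is removed by first taking $g_0$ smooth with compact support in $x$ — the limit equation has finite speed of propagation in $x$, so the supports stay bounded on $[0,T]$ — and then extending to general $g_0\in H^1(\Omega)$ by density, using the $\eps$- and support-uniform energy bounds. The limit $g$ is the desired weak solution: $0\le g\le M_0e^{\lambda t}$, $g\in L^2(0,T;H^1(\Omega))$ (in fact $L^\infty(0,T;H^1(\Omega))$), and $\pa_t g\in L^2(0,T;V')$, the Dirichlet condition at $w=0$ being encoded in $V$.

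The main obstacle is the incomplete diffusion. Without diffusion in $x$ the standard parabolic existence theory does not apply on $\Omega$, and on the unbounded domain the compact Sobolev embedding fails, so a global Aubin--Lions argument is unavailable; the whole construction therefore rests on the linearity of the $x$-transport, which is what makes the $x$-differentiated energy estimate close and lets the fixed point be set up as a contraction in $C([0,T];L^2)$ rather than through compactness. It also rests on the sign conditions $\Gamma\ge0$, $\Gamma_w\le0$ of H2, which are exactly what control the nonlocal term in both the $L^\infty$ bound and the energy estimates. The degeneracy of the model in $w$ is not treated: the estimates use $\int Dg_{\eps,w}^2\ge\delta\|g_{\eps,w}\|^2$ essentially and deteriorate as $\delta\to0$.
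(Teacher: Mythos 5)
Your proposal is correct in substance and follows essentially the same route as the paper: $\eps g_{xx}$ regularisation, the same choice of $\lambda$, the sign condition $\Gamma_w\le 0$ giving $K[g]_w\le 0$ in both the Stampacchia truncation and the energy estimates, the decisive step of differentiating the equation in $x$ and testing with $g_{\eps,x}$ to close the $H^1$ bound, and Aubin--Lions for the product $K[g_\eps]g_\eps$. The only structural differences are that you set up the nonlocal term as a Banach contraction in $C([0,T];L^2)$ where the paper truncates $K$ and invokes Leray--Schauder, and that you work on the unbounded domain directly where the paper solves on $\Omega_R=(-R,R)\times(0,R)$ and removes $R$ by a diagonal argument; both are legitimate. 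Two small slips: (i) the $\eps$-regularised equation does \emph{not} have finite speed of propagation in $x$ (the viscous term $\eps g_{xx}$ spreads supports instantly), so that remark cannot be used to confine the approximations --- fortunately it is also unnecessary, since local compactness plus the global $L^2$ bound suffices to identify the limit of the nonlinear term; (ii) Aubin--Lions on a slab $(-R,R)\times\R^+$ is not available because the embedding $H^1\hookrightarrow L^2$ fails on domains unbounded in $w$ --- you must localise in $w$ as well (as the paper does) and control the tail of $K[g_\eps]$ for large $v$ via the square-integrability of $\Gamma$ together with the uniform $L^2(\Omega)$ bound. Both points are easily repaired and do not affect the overall argument.
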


The idea of the proof is to regularize equation \eqref{1.eq} by adding 
a second-order derivative with respect to $x$, 
to truncate the nonlinearity, and to solve the equation 
in the finite interval $w\in(0,R)$. Then we pass to the deregularization limit.
Let $R>0$, $0<\eps<1$, $M>0$, set
$$
  K_M[g](x,w,t) = \int_0^R\Gamma(v,w)(g)_M(x,v,t)dv, \quad
	(g)_M = \max\{0,\min\{M,g\}\},
$$
where $g$ is an integrable function, and
introduce $\Omega_R=(-R,R)\times(0,R)$.
We split the boundary $\pa\Omega_R=\pa\Omega_D\cup\pa\Omega_N$ into two parts:
$\pa\Omega_D=\{(x,w):x\in[-R,R],\ w=0,R\}$ and 
$\pa\Omega_N=\{(x,w):x=\pm R,\ w\in(0,R)\}$. Finally, we set $g^+=\max\{0,g\}$.
Consider the approximated nonlinear problem
\begin{align}
  & \pa_t g + (\Phi(x,w)g^+)_x = \big((K_M[g]+H(w)+D'(w))g^+\big)_w + (D(w)g_w)_w 
	+ \eps g_{xx}, \label{ex.eq.eps} \\
%	& g = 0 \quad\mbox{on }\pa\Omega_R,\ t>0, \quad 
  & g=0\quad\mbox{on }\pa\Omega_D, \quad
	g_x = 0\quad\mbox{on }\pa\Omega_N, \quad
	g(x,w,0) = 0\quad\mbox{in }\Omega_R. \label{ex.bic.eps}
\end{align}
We introduce the space $H_D^1(\Omega_R)$ consisting of those functions 
$v\in H^1(\Omega_R)$ which satisfy $v=0$ on $\pa\Omega_D$, and we set
$H_D^{-1}(\Omega_R)=(H_D^1(\Omega_R))'$.

The weak formulation of \eqref{ex.eq.eps}-\eqref{ex.bic.eps} reads as:
For all $v\in L^2(0,T;H_D^1(\Omega_R))$,
\begin{align}
  \int_0^T\langle\pa_t g,v\rangle dt
	&= -\int_0^T\int_{\Omega_R}\Big((\Phi_x(x,w)g^++\Phi(x,w)g^+_x)v \label{ex.weak} \\
	&\phantom{xx}{}
	+ \big(K_M[g]+H(w)+D'(w)\big)g^+v_w + d(w)g_wv_w + \eps g_xv_x\Big)dzdt, \nonumber
\end{align}
where $\langle\cdot,\cdot\rangle$ is the dual product between 
$H_D^{-1}(\Omega_R)$ and $H_D^1(\Omega_R)$.

\begin{proof}[Proof of Theorem \ref{thm.ex}.]
We wish to apply the Leray-Schauder fixed-point theorem. For this, we split the
proof in several steps.

{\em Step 1: solution of the linearized problem.}
Let $T>0$, $\widetilde g\in L^2(0,T;L^2(\Omega))$, and $\eta\in[0,1]$.
We introduce the forms
\begin{align}
  a(g,v) &= \int_{\Omega_R}\big(\eta\Phi(x,w)g_xv
	+ D(w)g_wv_w + \eps g_xv_x\big)dz, \quad
	g,v\in H_D^1(\Omega_R), \label{ex.defa} \\
	F(v) &= -\eta\int_{\Omega_R}\big(\Phi_x(x,w)\widetilde g^+v
	+ (K_M[\widetilde g]+H(w)+D'(w))\widetilde g^+v_w\big)dz. \label{ex.deff}
\end{align}
Since $K_M[\widetilde g]$ is bounded, it is not difficult to see that
$a$ is bilinear and continuous on $H_D^1(\Omega_R)^2$ and $F$ is linear and
continuous on $H_D^1(\Omega_R)$. Furthermore, using Young's inequality
and $D(w)\ge\delta>0$, it follows that, for some $C_\eps>0$,
\begin{align*}
  a(g,g) &\ge \frac12\int_{\Omega_R}\big(\delta g_w^2 dz + \eps(g_x^2+g^2)\big)dz
	- (C_\eps+\eps)\int_{\Omega_R} g^2 dz \\ 
	&\ge \min\{\delta,\eps\}\|g\|_{H^1(\Omega_R)}^2 
	- (C_\eps+\eps)\|g\|_{L^2(\Omega_R)}^2.
\end{align*}
By Corollary 23.26 in \cite{Zei90}, there exists a unique solution 
$g\in L^2(0,T;H^1_D(\Omega_R))\cap H^1(0,T;$ $H^{-1}_D(\Omega_R))$ to 
\begin{equation}\label{ex.a}
  \langle \pa_t g,v\rangle + a(g,v) = F(v), \quad t>0, \quad g(0)=\eta g_0.
\end{equation}
This defines the fixed-point operator 
$S:L^2(0,T;L^2(\Omega_R))\times[0,1]\to L^2(0,T;L^2(\Omega_R))$,
$S(\widetilde g,\eta)=g$, where $g$ solves \eqref{ex.a}. 
This operator satisfies $S(\widetilde g,0)=0$. Standard arguments show that
$S$ is continuous (employing $H^1$ estimates depending on $\eps$). Since
$L^2(0,T;H^1_D(\Omega_R))\cap H^1(0,T;$ $H^{-1}_D(\Omega_R))$ is compactly
embedded in $L^2(0,T;L^2(\Omega_R))$, the operator is also compact. In order
to apply the fixed-point theorem of Leray-Schauder, we need to show uniform
estimates. 

{\em Step 2: $L^\infty$ bound.} Let $g$ be a fixed point of $S(\cdot,\eta)$,
i.e., $g$ solves \eqref{ex.a} with $\widetilde g=g$. We choose $v=g^-:=\min\{0,g\}\in 
L^2(0,T;H_D^1(\Omega_R))$ as a test function in \eqref{ex.a} and integrate
over $(0,t)$. Since $g^+g^-=0$ and $g^-(0)=g_0^-=0$, we have
$$
  a(g,g^-) = \int_{\Omega_R}\big(D(w)(g^-_w)^2 + \eps(g^-_x)^2 \big)dz
	\ge 0, \quad F(g^-)=0,
$$
which shows that
$$
  \frac12\int_{\Omega_R}g^-(t)^2 dz = \frac12\int_{\Omega_R}g^-(0)^2 dz
	- \int_0^t a(g,g^-)ds \le 0.
$$
This yields $g^-=0$ and $g\ge 0$ in $\Omega_R$, $t>0$. In particular, we may write
$g$ instead of $g^+$ in \eqref{ex.defa}-\eqref{ex.deff}.

For the upper bound, we choose the test function 
$v=(g-M)^+\in L^2(0,T;H_D^1(\Omega_R))$ in \eqref{ex.weak}, where $M=M_0e^{\lambda t}$
for some $\lambda>0$ which will be determined later. By Hypothesis H3, 
$(g-M)^+(0)=(g_0-M_0)^+=0$. Observing that
$\pa_t M=\lambda M$, $(g-M)(g-M)_w^+=\frac12[((g-M)^+)^2]_w$ and integrating
by parts in the integrals involving $K_M[g]+H(w)+D'(w)$, we find that
\begin{align*}
  \frac12 & \int_{\Omega_R}(g-M)^+(t)^2 dz 
	= -\lambda\int_0^t\int_{\Omega_R}M(g-M)^+ dzds \\
	&\phantom{xx}{}
	-\eta\int_0^t\int_{\Omega_R}\big(\Phi_x(x,w)((g-M)+M)+\Phi(x,w)(g-M)_x^+\big)
	(g-M)^+ dzds \\
  &\phantom{xx}{}
	- \eta\int_0^t\int_{\Omega_R}(K_M[g]+H(w)+D'(w))((g-M)+M)(g-M)_w^+ dzds \\
	&\phantom{xx}{}
	- \int_0^t\int_{\Omega_R}\big(D(w)((g-M)_w^+)^2 + \eps((g-M)_x^+)^2\big)dzds \\
	&= -\eta\int_0^t\int_{\Omega_R}(\Phi_x(x,w)+\lambda)M(g-M)^+ dzds
	- \eta\int_0^t\int_{\Omega_R}\Phi_x(x,w)((g-M)^+)^2 dzds \\
	&\phantom{xx}{}- \eta\int_0^t\int_{\Omega_R}\Phi(x,w)(g-M)_x^+(g-M)^+ dzds \\
	&\phantom{xx}{}
	+ \frac{\eta}{2}\int_0^t\int_{\Omega_R}\big(K_M[g]_w + H'(w) + D''(w)\big)
	((g-M)^+)^2 dzds \\
	&\phantom{xx}{}
	+ \eta\int_0^t\int_{\Omega_R}\big(K_M[g]_w + H'(w) + D''(w)\big)M(g-M)^+ dzds \\
	&\phantom{xx}{}
	- \int_0^t\int_{\Omega_R}\big(D(w)((g-M)_w^+)^2 + \eps((g-M)_x^+)^2\big)dzds.
\end{align*}
The third integral on the right-hand side can be estimated by Young's inequality,
\begin{align*}
  - \eta\int_0^t\int_{\Omega_R}\Phi(x,w)(g-M)_x^+(g-M)^+ dzds
	&\le \frac{\eta}{2\eps}\|\Phi\|^2_{L^\infty(\Omega)}
	\int_0^t\int_{\Omega_R}((g-M)^+)^2 dzds \\
	&\phantom{xx}{}+ \frac{\eps}{2}\int_0^t\int_{\Omega_R}((g-M)_x^+)^2 dzds.
\end{align*}
Then, collecting the integrals involving $M(g-M)^+$ and $((g-M)^+)^2$ and
observing that $\Gamma_w\le 0$ implies that $K_M[g]_w\le 0$, it follows that
\begin{align*}
  \frac12 & \int_{\Omega_R}(g-M)^+(t)^2 dz 
	\le \eta\int_0^t\int_{\Omega_R}\big(-\Phi_x(x,w)+H'(w)+D''(w)-\lambda\big)
	M(g-M)^+ dzds \\
	&\phantom{xx}{}
	+ \frac{\eta}{2}\int_0^t\int_{\Omega_R}\bigg(\frac{1}{\eps}
	\|\Phi\|^2_{L^\infty(\Omega)}-2\Phi_x(x,w)+H'(w)+D''(w)\bigg)((g-M)^+)^2 dzds \\
	&\phantom{xx}{}
	- \int_0^t\int_{\Omega_R}\bigg(D(w)((g-M)_w^+)^2 + \frac{\eps}{2}((g-M)_x^+)^2\bigg)
	dzds.
\end{align*}
Choosing $\lambda\ge \|\Phi_x\|_{L^\infty(\Omega)}+\|H'\|_{L^\infty(0,\infty)}
+\|D''\|_{L^\infty(0,\infty)}$, the first integral on the right-hand side
is nonpositive. The last integral is nonpositive too, and the second integral
can be estimated by some constant $C_\eps>0$. We conclude that
$$
  \int_{\Omega_R}(g-M)^+(t)^2 dz 
	\le C_\eps\int_0^t\int_{\Omega_R}((g-M)^+)^2 dzds.
$$
Then Gronwall's lemma implies that $(g-M)^+=0$ and $g\le M$ in $\Omega_R$, $t>0$.
In particular, we can write $K[g]$ instead of $K_M[g]$ in \eqref{ex.weak}.

The $L^\infty$ bound provides the desired bound for the fixed-point operator
in $L^2(0,T;$ $L^2(\Omega_R))$,
yielding the existence of a weak solution to \eqref{ex.weak}.

{\em Step 3: uniform $H^1$ bound.} We wish to derive $H^1$ bounds independent 
of $\eps$. To this end, we choose first the test function 
$v=g\in L^2(0,T;H_D^1(\Omega_R))$ in \eqref{ex.weak} (replacing $T$ by $t\in (0,T)$):
\begin{align*}
  \frac12\int_{\Omega_R}g(t)^2 dz
	&= -\int_0^t\int_{\Omega_R}\Phi_x(x,w)g^2 dzds
	- \int_0^t\int_{\Omega_R}\Phi(x,w)g_xg dzds \\
	&\phantom{xx}{}-\frac12\int_0^t\int_{\Omega_R}\big(K[g]+H(w)+D'(w)\big)
	(g^2)_w dzds \\
	&\phantom{xx}{}- \int_0^t\int_{\Omega_R}\big(D(w)g_w^2 + \eps g_x^2\big)dzds
	+ \frac12\int_{\Omega_R}g_0^2dz.
\end{align*}
Applying Young's inequality to the second integral on the right-hand side,
integrating by parts in the third integral, and observing
that $g=0$ at $w\in\{0,R\}$ yields, for some constant $C_1>0$ which depends on the
$L^\infty$ norms of $\Phi_x$, $H'$, and $D''$ (we use again that $K[g]_w\le 0$),
\begin{align}
  \frac12\int_{\Omega_R}g(t)^2 dzdt 
	&\le C_1\int_0^t\int_{\Omega_R} g^2 dzds
	+ C_1\int_0^T\int_{\Omega_R}g_x^2dzds \label{ex.ineq1} \\
	&\phantom{xx}{}- \int_0^T\int_{\Omega_R}\big(\delta g_w^2 + \eps g_x^2\big)dzds
	+ \frac12\int_{\Omega_R}g_0^2dz. \nonumber 
\end{align}
Since $C_1>\eps$ is possible, this does not give an estimate, and we need
a further argument. 

Next, we differentiate \eqref{ex.eq.eps} with respect to $x$ in the sense of
distributions and set $h:=g_x$:
\begin{align}\label{ex.h}
  \pa_t h + \big(\Phi_x(x,w)g + \Phi(x,w)h\big)_x 
	&= (K[h]g)_w + \big((K[g] + H(w) + D'(w))h\big)_w \\
	&\phantom{xx}{}+ (D(w)h_w)_w + \eps h_{xx} \quad\mbox{in }\Omega_R,\ t>0. \nonumber
\end{align}
We observe that the boundary condition
$g=0$ on $\pa\Omega_D$ implies that also
$g_x=0$ holds on $\pa\Omega_D$ and so, $g_x=0$ on $\pa\Omega_R$.
Hence, equation \eqref{ex.h} is complemented with homogeneous 
Dirichlet boundary conditions. Furthermore,
$h(x,w,0)=g_{0,x}(x,w)$. The weak formulation of \eqref{ex.h} reads as
\begin{align*}
  \int_0^T\langle \pa_t h,v\rangle dt
	&= -\int_0^T\int_{\Omega_R}\Big(\big(\Phi_{xx}(x,w)g + 2\Phi_x(x,w)h 
	+ \Phi(x,w)h_x\big)v \\
	&\phantom{xx}{}
	+ K[h]gv_w + \big(K[g]+H(w)+D'(w)\big)hv_w + D(w)h_wv_w + \eps h_xv_x\Big)dzdt
\end{align*}
for all $v\in L^2(0,T;H_0^1(\Omega_R))$. 
This is a linear nonlocal problem for $h$, with given $g$,
and we verify that there exists a solution
$h\in L^2(0,T;H^1_0(\Omega_R))\cap H^1(0,T;H^{-1}(\Omega_R))$, using
similar arguments as above. Therefore, 
we can choose $v=h$ as a test function in \eqref{ex.h}:
\begin{align*}
  \frac12 & \int_{\Omega_R}h(t)^2 dz
	= -\int_0^t\int_{\Omega_R}\bigg((\Phi_{xx}(x,w)gh + 2\Phi_x(x,w)h^2
	+ \frac12\Phi(x,w)(h^2)_x\bigg)dzds \\
	&\phantom{xx}{}- \int_0^t\int_{\Omega_R}\bigg(K[h]gh_w 
	+ \frac12\big(K[g]+H(w)+D'(w)\big)(h^2)_w + D(w)h_w^2 + \eps h_x^2\bigg)dzds \\
	&\phantom{xx}{}+ \frac12\int_{\Omega_R}g_x(0)^2dz.
\end{align*}
We integrate by parts and employ the inequalities $K[g]_w\le 0$, $D(w)\ge\delta$:
\begin{align}
  \frac12 & \int_{\Omega_R}h(t)^2 dzds
	\le -\int_0^t\int_{\Omega_R}\bigg(\Phi_{xx}(x,w)gh + \frac32 \Phi_{x}(x,w)h^2
	\bigg)dzds \nonumber \\
	&\phantom{xx}{}- \int_0^t\int_{\Omega_R}K[h]gh_w dzds
	+ \frac12\int_0^t\int_{\Omega_R}\big(H'(w)+D''(w)\big)h^2 dzds \label{ex.aux1} \\
	&\phantom{xx}{}- \int_0^t\int_{\Omega_R}\big(\delta h_w^2 + \eps h_x^2\big)dzds
	+ \frac12\int_{\Omega_R}g_x(0)^2dz. \nonumber
\end{align}
The first integral on the right-hand side is estimated by using Young's inequality:
\begin{align*}
  \int_0^t\int_{\Omega_R}\bigg(\Phi_{xx}(x,w)gh + \frac32 \Phi_{x}(x,w)h^2
	\bigg)dzds
	&\le \frac12\|\Phi_{xx}\|_{L^\infty(\Omega)}\int_0^t\int_{\Omega_R}(g^2+h^2)dzds \\
	&\phantom{xx}{}
	+ \frac32\|\Phi_x\|_{L^\infty(\Omega)}\int_0^t\int_{\Omega_R}h^2 dzds.
\end{align*}
For the second integral on the right-hand side of \eqref{ex.aux1}, 
we observe that $0\le g\le M$ and
$\|K[h]\|_{L^2(\Omega_R)}\le C_\Gamma\|h\|_{L^2(\Omega_R)}$, 
where $C_\Gamma^2=\int_0^\infty\int_0^\infty\Gamma(v,w)^2 dvdw$. Thus,
\begin{align*}
  - \int_0^t\int_{\Omega_R}K[h]gh_w dzds
	&\le M\int_0^t\|h\|_{L^2(\Omega_R)}\|h_w\|_{L^2(\Omega_R)}ds \\
	&\le \frac{\delta}{2}\int_0^t\int_{\Omega_R}h_w^2 dzds
	+ \frac{M}{2\delta}\int_0^t\int_{\Omega_R}h^2 dzds.
\end{align*}
This shows that, for some $C_2(\delta)>0$,
\begin{align}
  \frac12 \int_{\Omega_R}h(t)^2 dz
	&\le C_2(\delta)\int_0^t\int_{\Omega_R}(g^2+h^2)dzds
	- \frac{\delta}{2}\int_0^t\int_{\Omega_R}h_w^2 dzds \nonumber \\
	&\phantom{xx}{}- \eps\int_0^t\int_{\Omega_R}h_x^2 dzds
	+ \frac12\int_{\Omega_R}g_x(0)^2dz. \label{ex.ineq2}
\end{align}
We add \eqref{ex.ineq1} and \eqref{ex.ineq2} to find that, for some $C_3(\delta)>0$,
\begin{align*}
  \int_{\Omega_R}(g^2+h^2)(t)dz 
	+ \int_0^t\int_{\Omega_R}\big(\delta g_w^2 + \eps h^2 + \eps h_x^2\big)dzds
	&\le C_3(\delta)\int_0^t\int_{\Omega_R}(g^2 + h^2) dzds \\
	&\phantom{xx}{}+ \frac12\int_{\Omega_R}(g_0^2+g_{0,x}^2)dz.
\end{align*}
Gronwall's lemma provides uniform estimates for $g$ and $g_x=h$:
\begin{equation}\label{ex.unif1}
  \|g\|_{L^\infty(0,T;L^2(\Omega_R))}
	+ \|g_x\|_{L^\infty(0,T;L^2(\Omega_R))}
	+ \|g_w\|_{L^2(0,T;L^2(\Omega_R))} \le C,
\end{equation}
where $C>0$ depends on $\delta$, $M$, and the $L^{\infty}$ bounds for 
$\Phi$, $H$, $D'$ and their derivatives, but not on $R$ and $\eps$.

{\em Step 4: limit $\eps\to 0$.} We wish to pass to the limit $\eps\to 0$
in \eqref{ex.eq.eps}. Let $g_\eps:=g$ be a solution to 
\eqref{ex.eq.eps}-\eqref{ex.bic.eps} with $K[g]=K_M[g]$.
First, we estimate $\pa_t g_\eps$:
\begin{align}
  \|\pa_t g_\eps\|_{L^2(0,T;H_D^{-1}(\Omega_R))}
	&\le \|\Phi(x,w)g_\eps\|_{L^2(0,T;L^2(\Omega_R))} \nonumber \\
	&\phantom{xx}{}+ \|K[g_\eps]+H(w)+D'(w)\|_{L^\infty(0,T;L^\infty(\Omega_R))}
	\|g_\eps\|_{L^2(0,T;L^2(\Omega_R))} \label{ex.unif2} \\
	&\phantom{xx}{}+ \big(\|D\|_{L^\infty(0,T;L^\infty(\Omega_R))} + 1\big)
	\|g_\eps\|_{L^2(0,T;H^1(\Omega_R))} \le C, \nonumber
\end{align}
where $C>0$ does not depend on $\eps$ and $R$ (since $K[g_\eps]$ is uniformly
bounded). Estimates \eqref{ex.unif1} and
\eqref{ex.unif2} allow us to apply the Aubin-Lions lemma to conclude the
existence of a subsequence of $(g_\eps)$, which is not relabeled, such that
as $\eps\to 0$, 
\begin{align*}
  g_\eps\to g &\quad\mbox{strongly in }L^2(0,T;L^2(\Omega_R)), \\
	g_\eps\rightharpoonup g &\quad\mbox{weakly in }L^2(0,T;H^1(\Omega_R)), \\
	\pa_t g_\eps\rightharpoonup \pa_t g &\quad\mbox{weakly in }
	L^2(0,T;H_D^{-1}(\Omega_R)).
\end{align*}
By the Cauchy-Schwarz inequality, this shows that
\begin{align*}
  \| & K[g_\eps]-K[g]\|_{L^2(0,T;L^2(\Omega_R))} \\
	&\le \bigg(\int_0^R\int_0^\infty\Gamma(v,w)^2dvdw\bigg)
  \int_0^T\int_0^R\int_{-R}^R (g_\eps-g)^2(x,w,t)dxdwdt \\
	&\le C_\Gamma\|g_\eps-g\|_{L^2(0,T;L^2(\Omega_R))} \to 0 \quad\mbox{as }\eps\to 0.
\end{align*}
We infer that
$$
  K[g_\eps]g_\eps \to K[g]g \quad\mbox{strongly in }L^1(0,T;L^1(\Omega_R)).
$$
Since $(K[g_\eps]g_\eps)$ is bounded, this convergence holds in $L^p$ 
for any $p<\infty$.
Consequently, we may perform the limit $\eps\to 0$ in \eqref{ex.weak} 
(with $g^+=g$ and $K_M[g]=K[g]$) to obtain for all $v\in L^2(0,T;H_D^1(\Omega_R))$,
\begin{align}
  \int_0^T\langle\pa_t g,v\rangle 
	&= -\int_0^T\int_{\Omega_R}\Big((\Phi_x(x,w)g + \Phi(x,w)g_x)v \label{ex.weak2} \\
	&\phantom{xx}{}+ \big(K[g]+H(w)+D'(w)\big)gv_w + D(w)g_wv_w\Big)dzdt \nonumber
\end{align}

{\em Step 5: Limit $R\to\infty$.} The limit $R\to\infty$ is based on
Cantor's diagonal argument. We have shown that there exists a weak solution 
$g_n$ to \eqref{ex.weak2} with $g_n(0)=g_0$ in the sense of $H_D^{-1}(\Omega_n)$,
where $n\in\N$. In particular, $(g_n)$ is bounded in $L^2(0,T;H^1(\Omega_m))$ for all
$n\ge m$. We can extract a subsequence $(g_{n,m})$ of $(g_n)$ that converges weakly
in $L^2(0,T;H^1(\Omega_m))$ to some $g^{(m)}$ as $n\to\infty$. 
Observing that the estimates in Step 4 are independent of $R=n$, we obtain even
the strong convergence $g_{n,m}\to g^{(m)}$ in $L^2(0,T;L^2(\Omega_m))$ and a.e.\
in $\Omega_m\times(0,T)$. This yields the diagonal scheme
\begin{align*}
  g_{1,1},\ g_{2,1},\ g_{3,1},\ &\ldots\ \to g^{(1)} = u|_{\Omega_1\times(0,T)}, \\
	g_{2,2},\ g_{3,2},\ &\ldots\ \to g^{(2)} = u|_{\Omega_2\times(0,T)}, \\
	g_{3,3},\ &\ldots\ \to g^{(3)} = u|_{\Omega_3\times(0,T)}, \\
	&\ddots \phantom{\to xx} \vdots
\end{align*}
This means that there exists a subsequence $(g_{n,1})$ of $(g_n)$ that converges
strongly in $L^2(0,T;H^1(\Omega_1))$ to some $g^{(1)}$. From this subsequence,
we can select a subsequence $(g_{n,2})$ that converges strongly in 
$L^2(0,T;H^1(\Omega_1))$ to some $g^{(2)}$ such that $g^{(2)}|_{\Omega_1\times(0,T)}
= g^{(1)}$, etc. The diagonal sequence $(g_{n,n})$ converges to some $g\in
L^2(0,T;H^1(\Omega))$ which is a solution to \eqref{1.eq}-\eqref{1.bic}.
\end{proof}

%%%%%%%%%%%%%%%%%%%%%%%%%%%%%%%%%%%%%%%%%%%%%%%%%%%%%%%%%%%%%%%%%%%%%%%%%%%%%

\subsection{Asymptotic behavior of the moments}\label{sec.mom}

We analyze the time evolution of the macroscopic moments
$$
  m_w(g) = \int_\Omega g(x,w,t)wdz, \quad m_x(g) = \int_\Omega g(x,w,t)xdz,
$$
where $g$ is a (smooth) solution to \eqref{1.eq}-\eqref{1.bic}, in the special
situation that $P=1$ and $\Phi(x,w)$ is given by \eqref{2.Phi}. Observe
that $P=1$ implies that (recall \eqref{2.H})
$$
  H(w) = w-W, \quad\mbox{where }W=\int_0^\infty \omega M(\omega)d\omega.
$$
The parameter $W$ may be the same as in the definition of $\Phi(x,w)$ in
\eqref{2.Phi}.
First, we compute $\pa_t m_w(g)$. 
Using $g=0$ at $w=0$ and integrating by parts with respect to $w$, we obtain
$$  
  \pa_t m_w(g) = -\int_{\Omega}\big(K[g]g + H(w)g\big)dz
	\le -\int_\Omega(w-W)gdz = -m_w(g) + W\rho,
$$
where we have taken into account that $K[g]\ge 0$ and $\rho=\int_\Omega gdz$.
By Gronwall's lemma, $m_w(g(t))$ converges exponentially fast to the mean
value $\rho W$ as $t\to\infty$. 
This result is similar to the convergence of the mean
asset value for solutions to the Boltzmann equation, as shown in
Section \ref{sec.be}.

Next, we compute $\pa_t m_x(g)$. Then
$$
  \pa_t m_x(g) = \int_\R\Phi(x,w)g dw
	= -\delta\kappa\int_\R\int_{\{|w-W|<R}gdwdx + \kappa\int_\R\int_{\{|w-W|\ge R}gdwdx.
$$
This expression explains the role of the parameter $\delta$. Indeed, assume that
in some time interval, the number of agents with estimated asset value
around $W$ ($|w-W|<R$) is of the same order as those with asset value which differs
significantly from $W$ ($|w-W|\ge R$). Then, for $\delta\gg 1$, the
mean rationality is decreasing, and if $\delta\ll 1$, it is increasing. 
Thus, $\delta$ is a measure for the expected mean rationality.

The variance of $g$ with respect to $w$ is given by
$V_w(g)=\int_\Omega g(w-W)^2dz$. We compute
$$
  \pa_t V_w(g) = -2\int_\Omega(K[g]+H(w))(w-W)g dz + 2\int_\Omega D(w)g dz.
$$
At this point, we need some simplifying assumptions. Let $\Gamma(v,w)=\Gamma_0$ 
and $D(w)=w$. Then $K[g]=\Gamma_0\rho$ and
\begin{align*}
  \pa_t V_w(g) 
	&= -2\int_\Omega\big(\Gamma_0\rho(w-W)g + (w-W)^2g\big) dz
	+ 2\int_\Omega gwdz \\
	&= 2(1-\Gamma_0\rho)m_w(g)+ 2\Gamma_0\rho^2 W - 2V_w(g).
\end{align*}
We infer from $m_w(g(t))\to \rho W$ 
that the variance $V_w(g(t))$ converges to $2\rho W$ as $t\to\infty$.

%%%%%%%%%%%%%%%%%%%%%%%%%%%%%%%%%%%%%%%%%%%%%%%%%%%%%%%%%%%%%%%%%%%%%%%%%%%%%

\section{Numerical simulations}\label{sec.num}

\noindent  We illustrate the behavior of the solution to the kinetic model derived
in Section \ref{sec.be} by numerical simulations. 

\subsection{The numerical scheme}\label{sec.scheme}

The kinetic equation \eqref{2.be} is originally posed in the unbounded
spatial domain 
$(x,w)\in\R\times\R^+$. Numerically, we consider instead a bounded domain,
similarly as for the approximate equation \eqref{ex.eq.eps} in the existence
analysis. For this, let $(x,w)\in(-1,1)\times(0,1)$. This means that
agents with $x=-1$ are completely irrational and individuals with $x=1$ are
completely rational. The maximal possible asset value $w$ is normalized to one.
We choose uniform
subdivisions $(x_0,\ldots,x_N)$ for the variable $x$ and $(w_0,\ldots,w_M)$
for the variable $w$. We take $N=M=70$ in the simulations. 
The function $f(x,w,t_k)$ is approximated by $f_{ij}^k$, where $x\in(x_i,x_{i+1})$,
$w\in(w_j,w_{j+1})$, and $t_k=k\triangle t$, 
where $\triangle t$ is the time step size (we choose $\triangle t=10^{-5}$).

For the numerical approximation, we make an operator splitting ansatz,
i.e., we split the Boltzmann equation \eqref{2.be} into a collisional part
and a drift part. The collisional part 
$$
  \pa_t f = Q_I(f) \quad\mbox{or}\quad \pa_t f = Q_H(f,f)
$$
is numerically solved by using the interaction rules
\eqref{2.inter1} or \eqref{2.inter2}, respectively, 
and a slightly modified Bird scheme \cite{Bir94}.
First, we describe the choice of the interaction rule. 
The stochastic process $\eta$ is a point process with $\eta=\pm 0.06$
with probability 0.5. 
The total number of agents is normalized to one. 
We introduce the number of irrational agents
$I_{\rm irr}(w,t)=\int_{-1}^0 f(x,w,t)dx$ and the number of rational agents
$I_{\rm rat}(w,t)=\int_0^1 f(x,w,t)dx$. If for fixed $(w,t)$, the majority 
of the agents is rational ($I_{\rm rat} > 0.6$), we select the herding
interaction rule \eqref{2.inter2}. 
If the majority of the market participants is irrational
($I_{\rm rat}<0.4$), we choose the interaction rule \eqref{2.inter1}.
In the intermediate case, the choice of the interaction rule is random.
Clearly, this choice could be refined by relating it to the value of the
ratio $I_{\rm rat}/I_{\rm irr}$. The pairs of individuals
that interact are chosen randomly and at each step all the agents interact
with the background and with another randomly chosen agent, respectively.

After the interaction part, we need to distribute the function $f$ on the grid.
The distribution at $w^*$ is defined by $f(w^*)=f(w)-f(v)$. Then the
part $f(w^*)$ is distributed proportionally to the neighboring grid points
$w_j$ and $w_{j+1}$.
In order to avoid that the post-interaction values become negative, some
restriction on the random variables are needed; we refer to 
\cite[Section~2.1]{DuWo15} for details.

At each time step, we solve the transport part
$$
  \pa_t f = (\Phi(x,w)f)_x
$$
using a flux-limited Lax-Wendroff/upwind scheme. More precisely,
let $\triangle x=1/N$ be the step size for the rationality variable, and recall that
$\triangle t=10^{-5}$ is the time step size.
The value $f(x_i,w_j,t_k)$ is approximated by $f_{i}^k$ for a fixed $w_j$. 
We recall that the upwind scheme reads as
$$
  f_{i}^{k+1} = \left\{\begin{array}{ll}
	f_{i}^k - \frac{\triangle t}{\triangle x}\Phi(x_i,w_j)(f_{i}^k-f_{i-1}^k)
	&\quad\mbox{if }\Phi(x_i,w_j)>0, \\
	f_{i}^k - \frac{\triangle t}{\triangle x}\Phi(x_i,w_j)(f_{i+1}^k-f_{i}^k)
	&\quad\mbox{if }\Phi(x_i,w_j)\le 0,
	\end{array}\right.
$$
and the Lax-Wendroff scheme is given by
$$
  f_i^{k+1} = f_i^k - \frac{\triangle t}{2\triangle x}\Phi(x_i,w_j)
	(f^k_{i+1}-f_{i-1}^k)
	+ \frac{(\triangle t)^2}{2(\triangle x)^2}\Phi(x_i,w_j)^2
	(f_{i+1}^k-2f_i^k+f_{i-1}^k).
$$
The Lax-Wendroff scheme has the advantage that it is of second order, while
the first-order upwind scheme is employed close to discontinuities. 
The choice of the scheme depends on the smoothness of the data. 
In order to measure the smoothness, we compute the ratio $\theta_i^k$ of the
consecutive differences and introduce a smooth van-Leer limiter function
$\Psi(\theta_i^k)$, defined by
$$
	\Psi(\theta_i^k) = \frac{|\theta_i^k|+\theta_i^k}{1+|\theta_i^k|}, 
	\quad\mbox{where }\theta_i^k = \frac{f_i^k-f_{i-1}^k}{f^k_{i+1}-f_i}.
$$
Our final scheme is defined by
\begin{align*}
  f_i^{k+1} & = f_i^{k} - \frac{\triangle t}{\triangle x}\Phi(x_i,w_j)
	(F_{i+1}^{k}-F_i^{k}), 
	\quad\mbox{where} \\
	F_i^{k} &= \frac12(f_{i-1}^{k}-f_i^{k}) - \frac12\mbox{sgn}(\Phi(x_i,w_j))
	\bigg(1-\Psi(\theta_i)\bigg(1-\frac{\triangle t}{\triangle x}
	|\Phi(x_i,w_j)|\bigg)\bigg)(f_i^{k}-f_{i-1}^{k}).
\end{align*}

%%%%%%%%%%%%%%%%%%%%%%%%

\subsection{Choice of functions and parameters}\label{sec.choice}

We still need to specify the functions used in the simulations.
We take $\tau_H=\tau_I=1$, 
$$
  P(|w-W|)=1, \quad d(w)=4w(1-w), \quad \gamma(v,w)=1_{\{w<v\}}v(1-w),
$$
and $\Phi(x,w)$ is given by \eqref{2.Phi}.
%with $\delta=2$ and $\kappa=1$ (if not stated otherwise).
The values of the parameters $\alpha$, $\beta$, $R$, $W$, $\delta$, and $\kappa$
are specified below. 
With the simple setting $P=1$, the interaction rule \eqref{2.inter1} becomes
$w^*=(1-\alpha)w + \alpha W + \eta d(w)$. This means that $\alpha$ measures the
influence of the public source: if $\alpha=1$, the agent adopts the asset value
$W$, being the background value; if $\alpha=0$, the agent is not influenced
by the public source. The random variables $\eta$ is normally distributed
with zero mean and standard deviation $0.06$.

The diffusion coefficient $d(w)$ is chosen such that it vanishes at the
boundary of the domain of definition of $w$, i.e.\ at $w=0$ and $w=1$, and
that its maximal value is one. 

The choice of $\gamma(v,w)$ is similar to that one in \cite[Formula (11)]{DeLo14},
and we explained its structure already in Section \ref{sec.pih}. In
\eqref{2.gamma}, we have chosen $f(w)=1-w$. This means that agents do not
change their asset value due to herding when $w$ is close to its maximal value.
When the asset value is very low, $w\approx 0$, we have 
$w^*\approx \beta v + \eta_1 d(w)$, and the agent adopts
the value $\beta v$. 

%%%%%%%%%%%%%%%%%%%%%%%%

\subsection{Numerical test 1: constant $R$, constant $W$}

We choose $R=0.025$ and $W=0.5$. The aim is to understand the occurence of
bubbles and crashes depending on the parameters $\alpha$, $\beta$, and $\kappa$. 
We say that a bubble (crash) occurs at time $t$ if the mean asset value $m_w(f(t))$
is larger than $W+R$ (smaller than $W-R$). 
This definition is certainly a strong simplification. However,
there seems to be no commonly accepted scientific definition or classification
of a bubble. Shiller \cite[page 2]{Shi15} defines ``a speculative bubble as a situation
in which news of price increases spurs investor enthusiasm, which spreads by
psychological contagion from person to person''. 
Our definition may be different from the usual perception of a bubble or 
crash in real markets.

Figure \ref{fig.bubble} (left) presents the percentage of bubbles and crashes
for different values of $\alpha$. 
More precisely, we count how often the mean asset value is larger
than $W+R$ (smaller than $W-R$) and how often it lies in the range $[W-R,W+R]$. 
The quotient defines the percentage of bubbles (crashes). 
The simulations were performed 200 times
and the mean asset value is then averaged. We observe that
bubbles occur more frequently when $\alpha$ is close to zero. This may be
explained by the fact that $\alpha$ represents the reliability of the public
information, and when this quantity is small, the agents do not trust the
public source. If $\alpha$ is close to one, all the market participants rely
on the public information. This means that they assume an asset value close
to the ``fair'' prize $W$. This corresponds to a herding behavior, and the
herding interaction rule, which tends to higher values, applies, leading
to bubble formation. A market that does neither overestimate nor underestimate
public information leads to the smallest bubble percentage, here with
$\alpha$ being around $0.5$. Interestingly, the results vary only slightly
with repect to the parameter $\beta$.

\begin{figure}[ht]
\centering
\includegraphics[width=80mm]{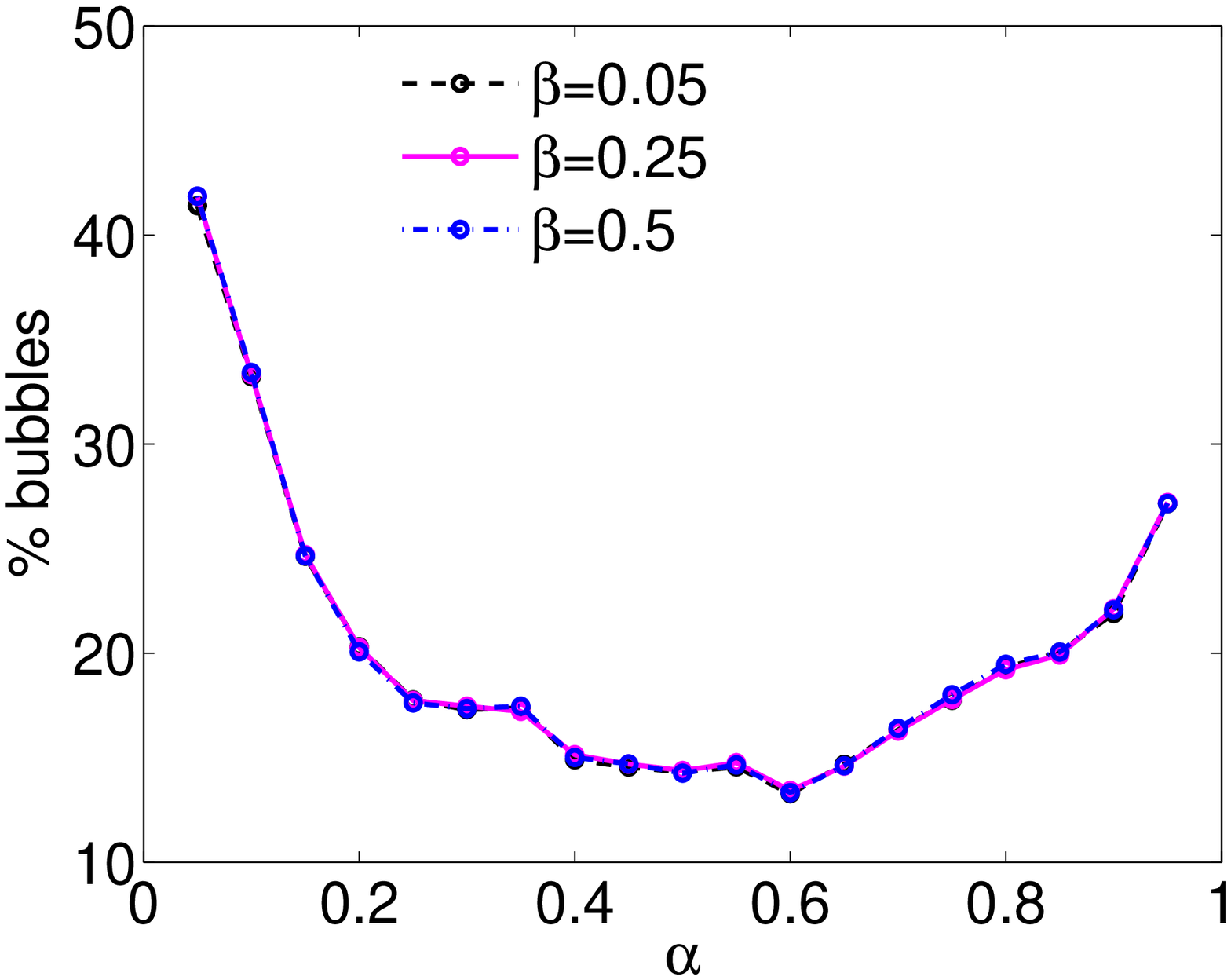}
\includegraphics[width=80mm]{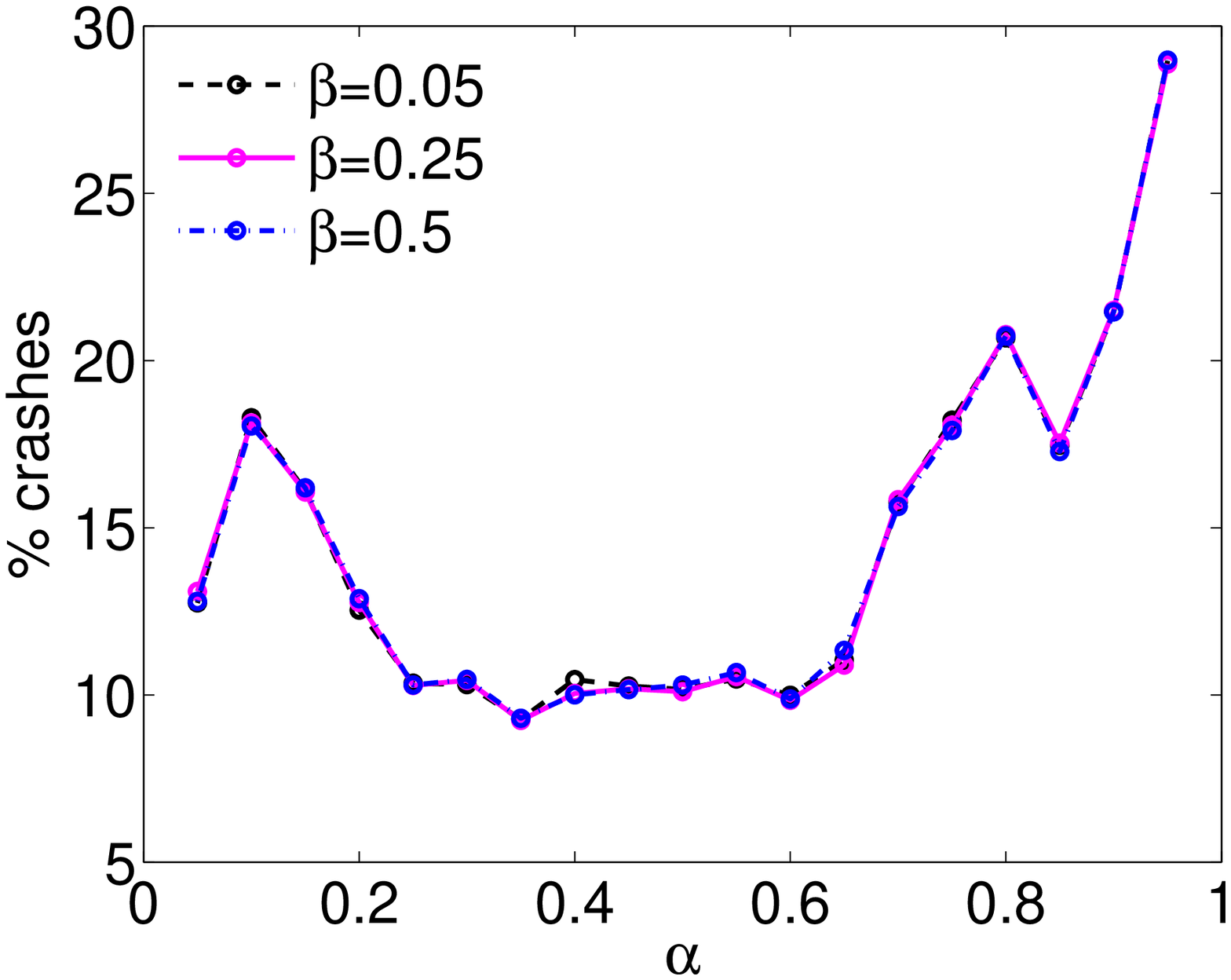}
\caption{Left: Percentage of bubbles (left) and crashes (right)
depending on the choice of $\alpha$ and $\beta$. The parameters are
$R=0.025$, $W=0.5$, $\delta=\kappa=1$.}
\label{fig.bubble}
\end{figure}

The percentage of crashes is depicted in Figure \ref{fig.bubble} (right).
Qualitatively, the percentage is small for values $\alpha$ not too far from 0.5,
but the shape of the curves is more complex than those for bubbles. For instance,
there is a local maximum at $\alpha=0.1$ and a local minimum at $\alpha=0.85$.
The percentage of crashes is largest for $\alpha$ close to one. Again,
the dependence on the parameter $\beta$ is very weak.

In the above simulations, we have assumed a constant value for $\alpha$, i.e.,
all market participants have the same attitude to change their mind when
interacting with public sources, We wish to show that nonconstant values
lead to similar conclusions. For this, we generate $\alpha$ from a normal
distribution with standard deviation 0.45 and various means $\langle\alpha\rangle$.
The result is shown in Figure \ref{fig.alpha} for $\beta=0.25$ and $\beta=0.5$. 
For comparison, the percentages for constant $\alpha$ and $\beta=0.05$ are also shown.
It turns out that the results for nonconstant or constant $\alpha$ are
qualitatively similar which justifies the use of constant $\alpha$.

\begin{figure}[ht]
\centering
\includegraphics[width=90mm]{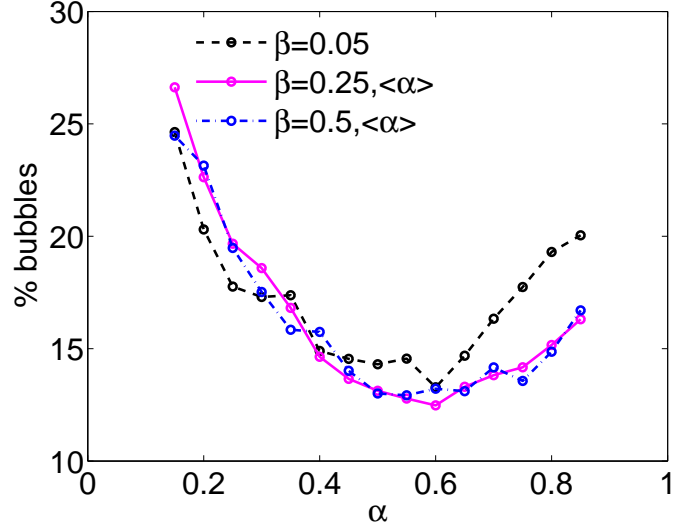}
\caption{Percentage of bubbles for varying $\alpha$ ($\beta=0.25$) and
constant $\alpha$ ($\beta=0.5$ and $\beta=0.05$).}
\label{fig.alpha}
\end{figure}

%%%%%%%%%%%%%%%%%%%%%%%%

\subsection{Numerical test 2: constant $R$, time-dependent $W(t)$}

Now, we chose $R=0.025$ and 
$$
  W(t) = 0.1 + 0.05\bigg(\sin\frac{t}{500\triangle t} 
	+ \frac12\exp\frac{t}{1500\triangle t}\bigg), \quad t\ge 0.
$$
The time evolution of the first moment $m_w(f(t))=\int_{\Omega_1} f(x,w,t)wdz$
is shown in Figure~\ref{fig.meanw}. We see that the mean asset value stays
within the range $[W(t)-R,W(t)+R]$ if $\alpha$ is small (except for increasing
``fair'' prices) and it has the
tendency to take values larger than $W(t)$ if $\alpha$ is large.

\begin{figure}[ht]
\centering
\includegraphics[width=80mm]{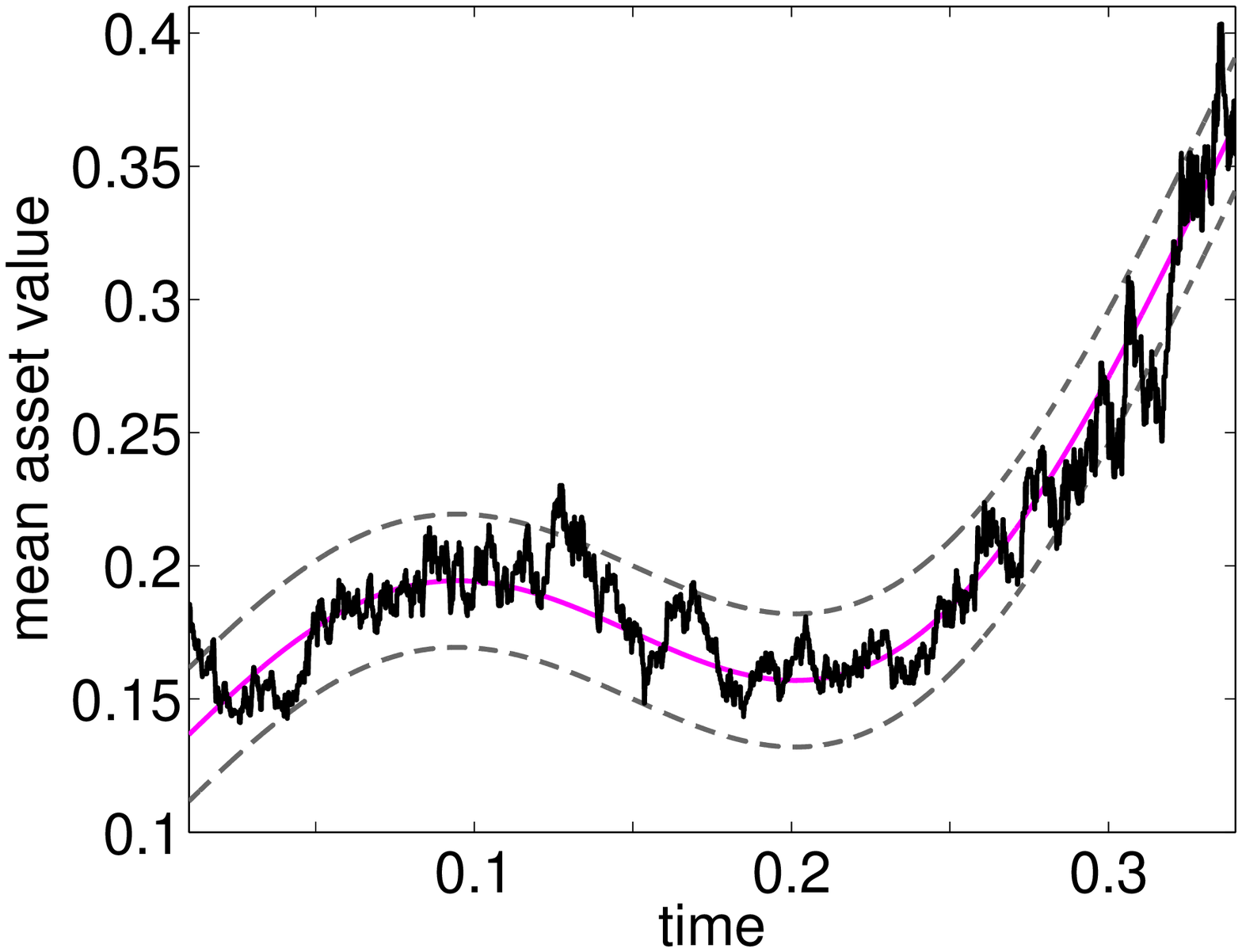}
\includegraphics[width=80mm]{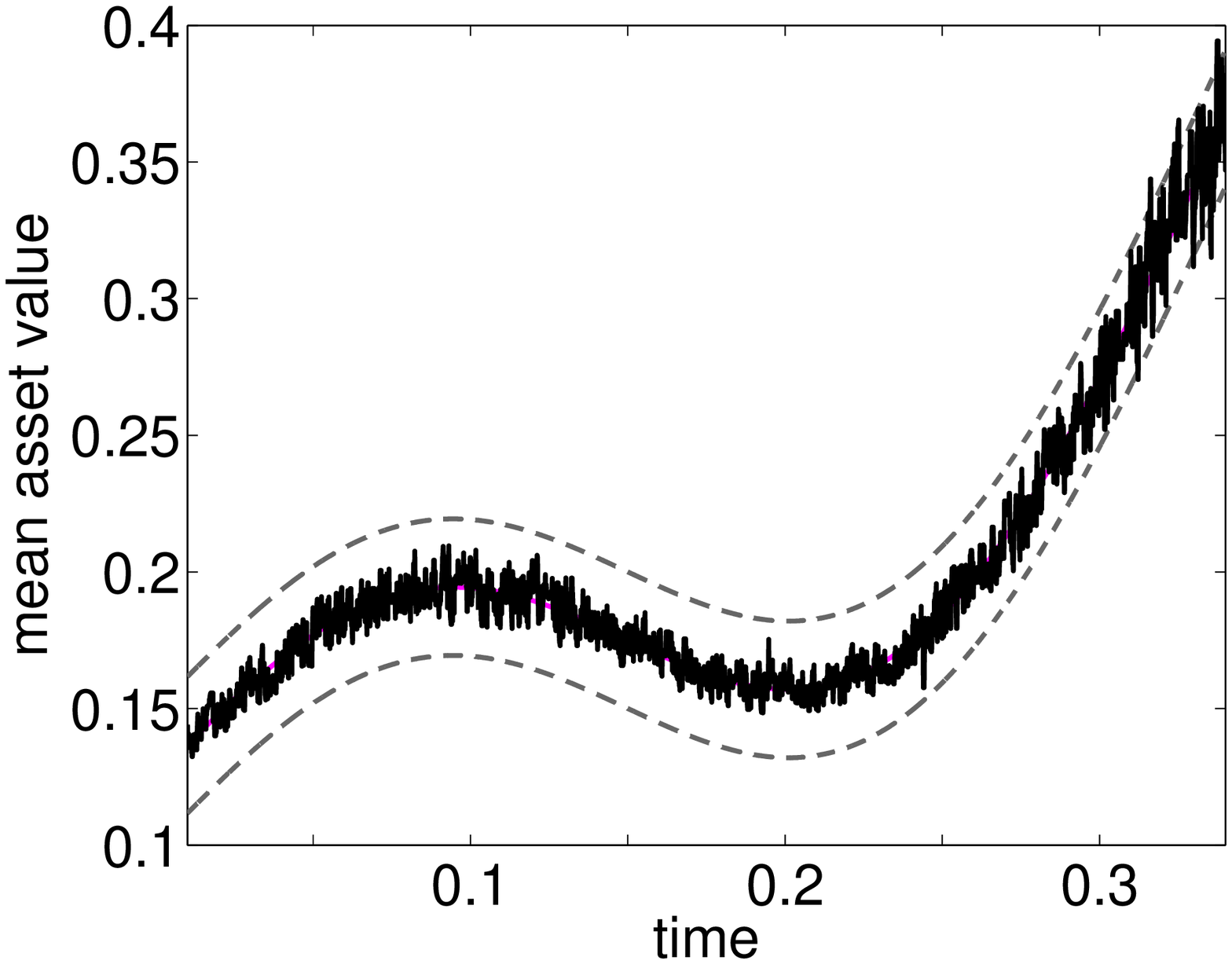}
\caption{Mean asset value $m_w(f(t))$ versus time $t$ for $\alpha=0.5$
(left) and $\alpha=0.05$ (right). The function
$W(t)$ is represented by the solid line in between the dashed lines which
represent the functions $W(t)+R$ and $W(t)-R$. The parameters are 
$\beta=0.25$, $R=0.025$, $\delta=2$, $\kappa=1$.}
\label{fig.meanw}
\end{figure}

Figure \ref{fig.delta} illustrates the influence of the parameter $\delta$
which describes the strength of the drift in the region $|w-W(t)|<R$.
The background value $W(t)$ models a crash: It increases up to time $t=0.2$
then decreases abruptly, and stays constant for $t>0.2$. 
For small values of $\delta$,
the mean asset value decreases slowly while it adapts to $W(t)$ more
quickly when $\delta$ is large. Interestingly, we observe a (small) time delay
for small $\delta$ although the equations do not contain any delay term.
The delay is only caused by the slow drift term.
The same phenomenon can be reproduced for abruptly increasing $W(t)$.

\begin{figure}[ht]
\centering
\includegraphics[width=80mm]{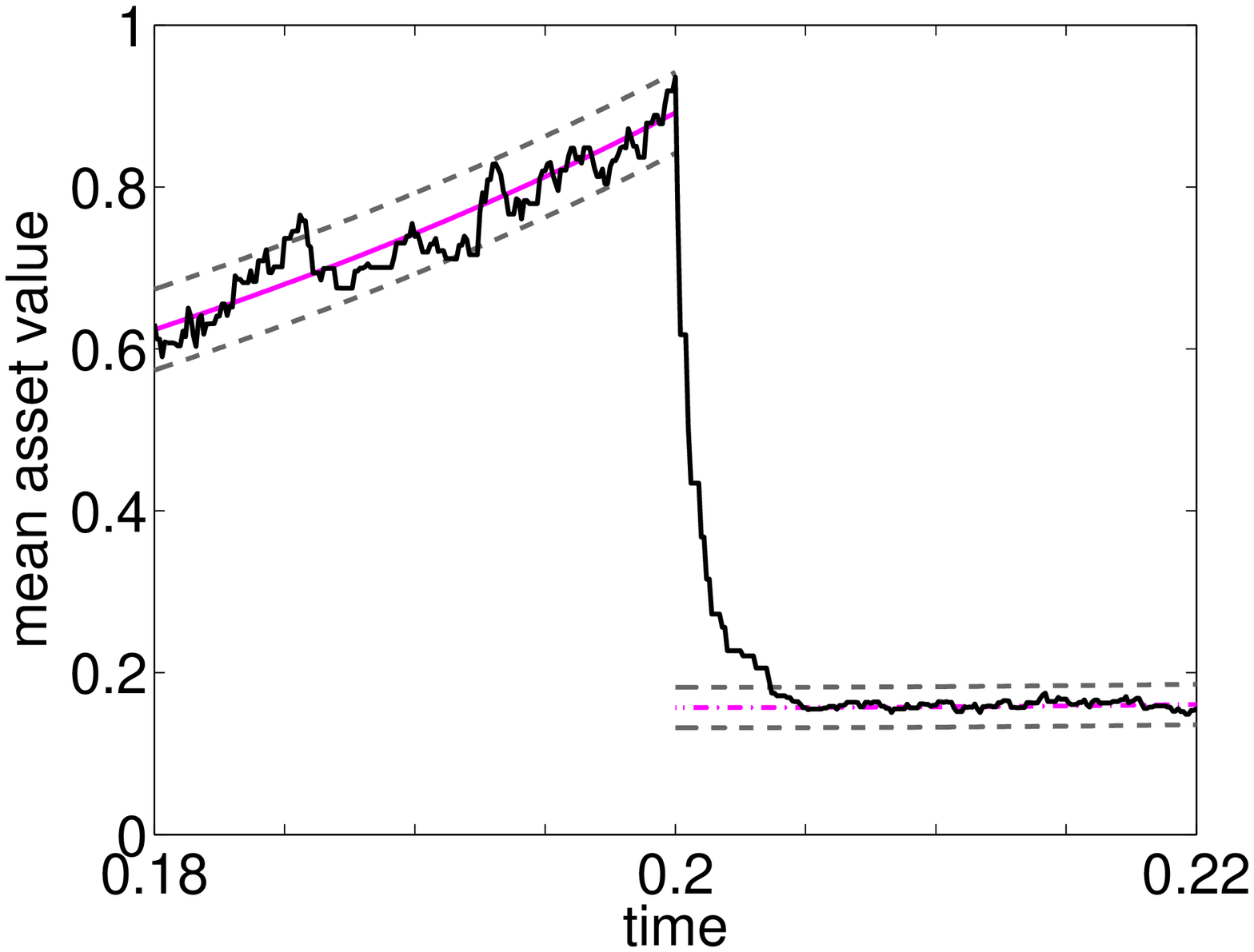}
\includegraphics[width=80mm]{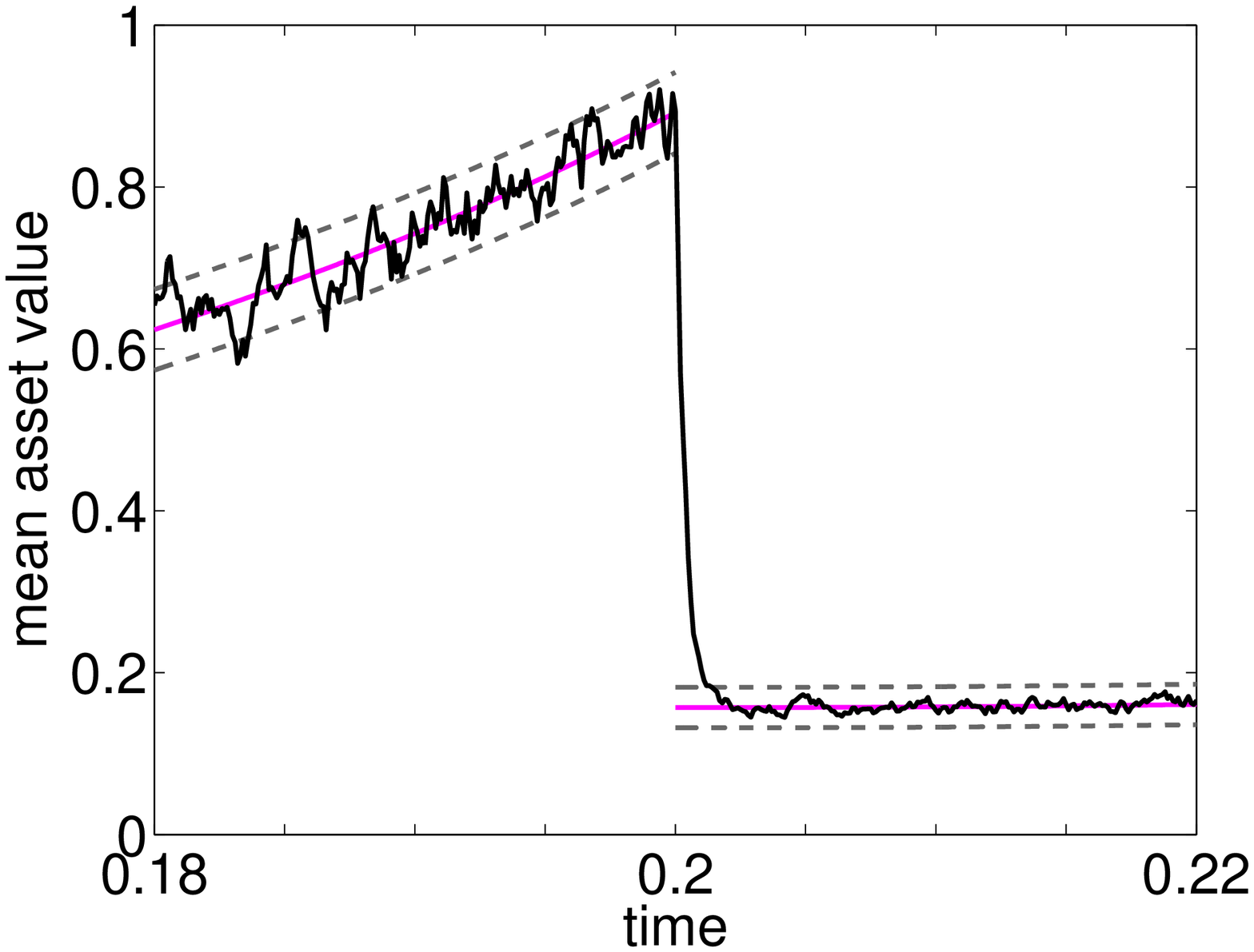}
\caption{Mean asset value $m_w(f(t))$ versus time for $\delta=0.01$ 
(left) and $\delta=100$ (right) with $\alpha=0.25$, $\beta=0.2$, $R=0.025$,
$\kappa=1$.}
\label{fig.delta}
\end{figure}

%%%%%%%%%%%%%%%%%%%%%%%%%%

\subsection{Numerical test 3: time-dependent $R(t)$}

The final numerical test is concerned with time-dependent bounds $R(t)$.
We distinguish the upper and lower bound and accordingly the
boundaries $w=W(t)+R^+(t)$ and $w=W(t)-R^-(t)$. 
The functions $R^\pm(t)$ are defined as the Bollinger bands which
are volatility bands above and below a moving average. They are employed
in technical chart analysis although its interpretation may be delicate.
The definition reads as
$$
  R^\pm(t_k) = M_n(t_k) \pm k\sigma(t_k),
$$
where $M_n(t_k)$ is the $n$-period moving average (we take $n=30$), 
$k$ is a factor (usually $k=2$), and $\sigma(t_k)$ is the 
corrected sample standard deviation,
$$
  M_n(t_k) = \frac{1}{n}\sum_{\ell=1}^n m_w(f(t_{k-\ell})), \quad
	\sigma(t_k) = \bigg(\frac{1}{n-1}\sum_{\ell=1}^n
	\big(m_w(f(t_{k-\ell}))-M_n(t_{k-\ell})\big)^2\bigg)^{1/2}.
$$

Figure \ref{fig.boll1} shows the time evolution of the mean asset value
and the Bollinger bands for two different values of $\alpha$ and constant $W$.
One may say that the market is overbought (or undersold) 
when the asset value is close to the upper (or lower) Bollinger band.
For small values of $\alpha$, the market participants are not much
influenced by the public information and they tend to increase their estimated
asset value due to herding. 

\begin{figure}[ht]
\centering
\includegraphics[width=80mm]{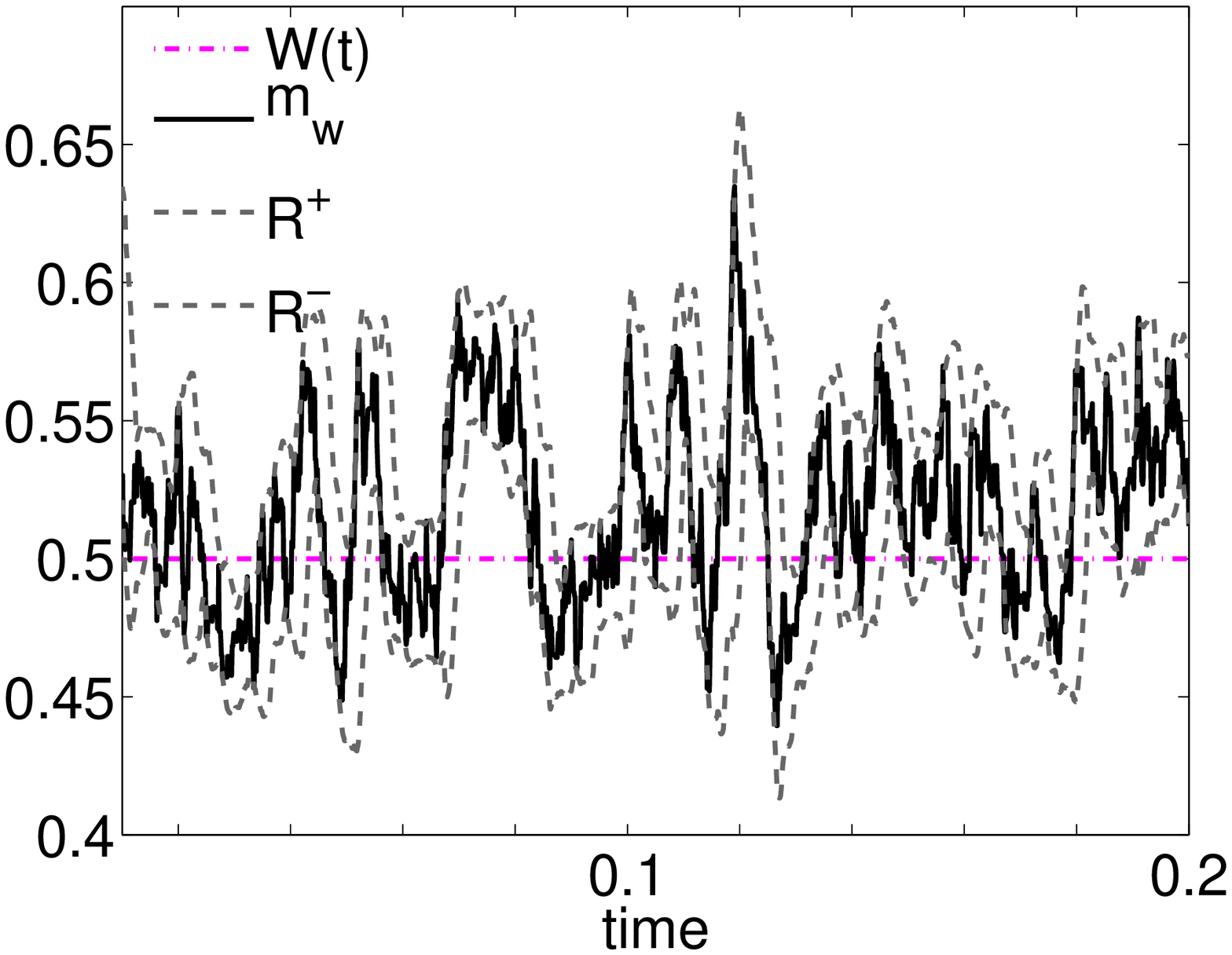}
\includegraphics[width=80mm]{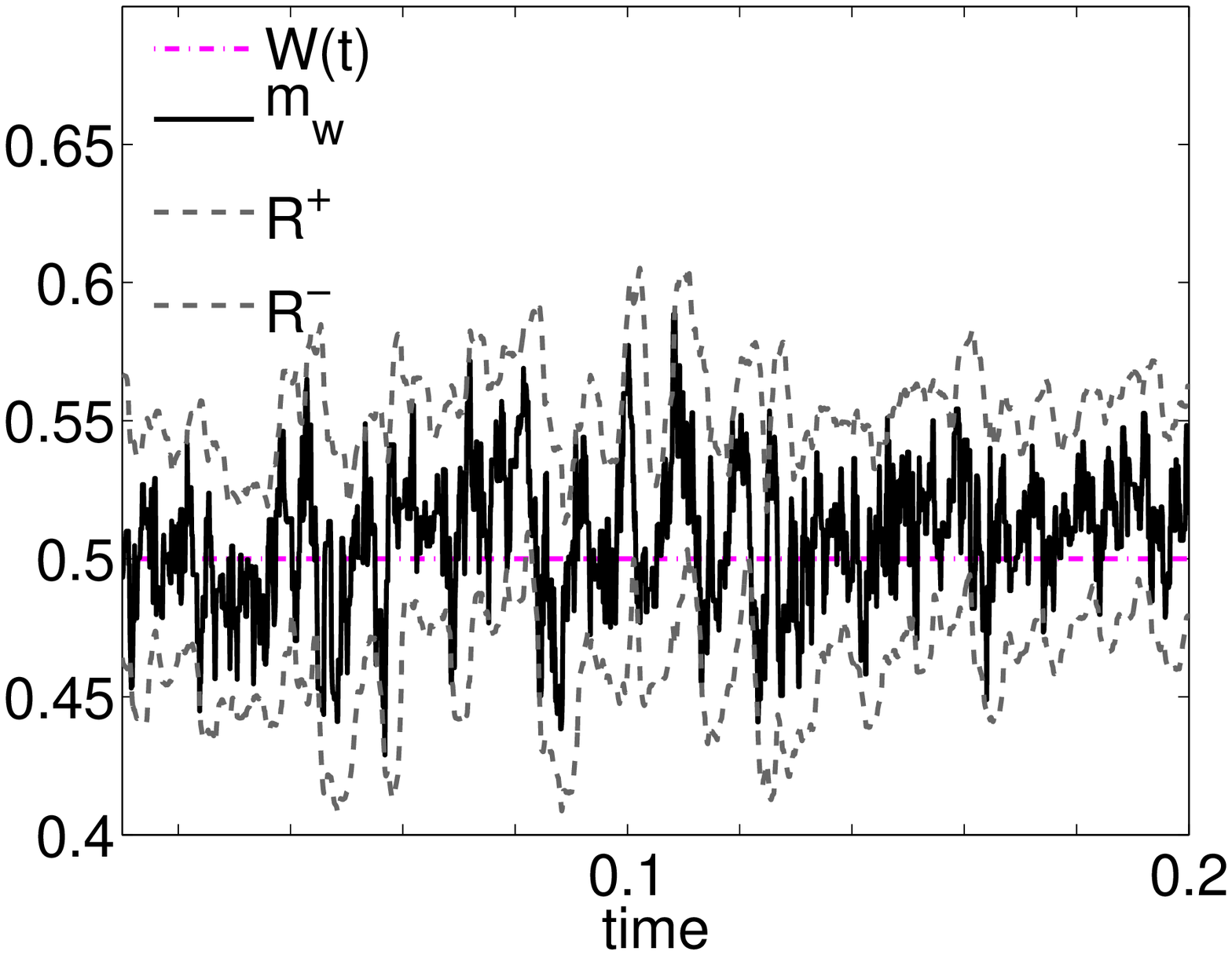}
\caption{Mean asset value $m_w(f(t))$ and Bollinger bands $R^\pm(t)$ versus 
time for $\alpha=0.2$ (left) and $\alpha=0.05$ (right). The parameters are
$\beta=0.25$, $W=0.5$, $\delta=1$, $\kappa=1$.}
\label{fig.boll1}
\end{figure}

The mean asset value and the corresponding Bollinger bands for a discontinuous
background value $W(t)$ is displayed in Figure \ref{fig.boll2} (left column).
We have chosen $d(w)=w(1-w)$ and $\eta=\pm 0.06$ (upper row) or
$\eta=\pm 0.18$ (lower row). 
The value $W(t)$ abruptly decreases at time $t=0.2$. 
We are interested in the difference of the upper and lower Bollinger bands,
more precisely in the Bollinger bandwidth $B(t)=100(R^+(t)-R^-(t))/W(t)$,
measuring the relative difference between the upper and lower Bollinger bands.
According to chart analysts, falling (increasing) bandwidths reflect decreasing
(increasing) volatility. 
In our simulation, the jump of $W(t)$ gives rise to a peak of the Bollinger 
bandwidth at $t=0.2$; see Figure \ref{fig.boll2} (right column). 
Another small peak can be observed at $t\approx 0.38$ (upper right figure)
when $\eta=\pm 0.06$. For larger values of $\eta$ (lower right figure),
the fluctuations in the Bollinger bandwidth are larger.

In chart analysis, the bandwidth is employed to identify
a band squeeze. When the asset value leaves the interval $[R^-,R^+]$, this situation
may indicate a change of direction of the prices. Clearly, this interpretation
cannot be directly applied to the present situation. On the other hand, 
the Bollinger bands are an additional tool to identify large changes in the
mean asset value, for instance when the background value $W(t)$ is no longer
deterministic but driven by some stochastic process. We leave this
generalization for future work.

\begin{figure}[ht]
\centering
\includegraphics[width=80mm]{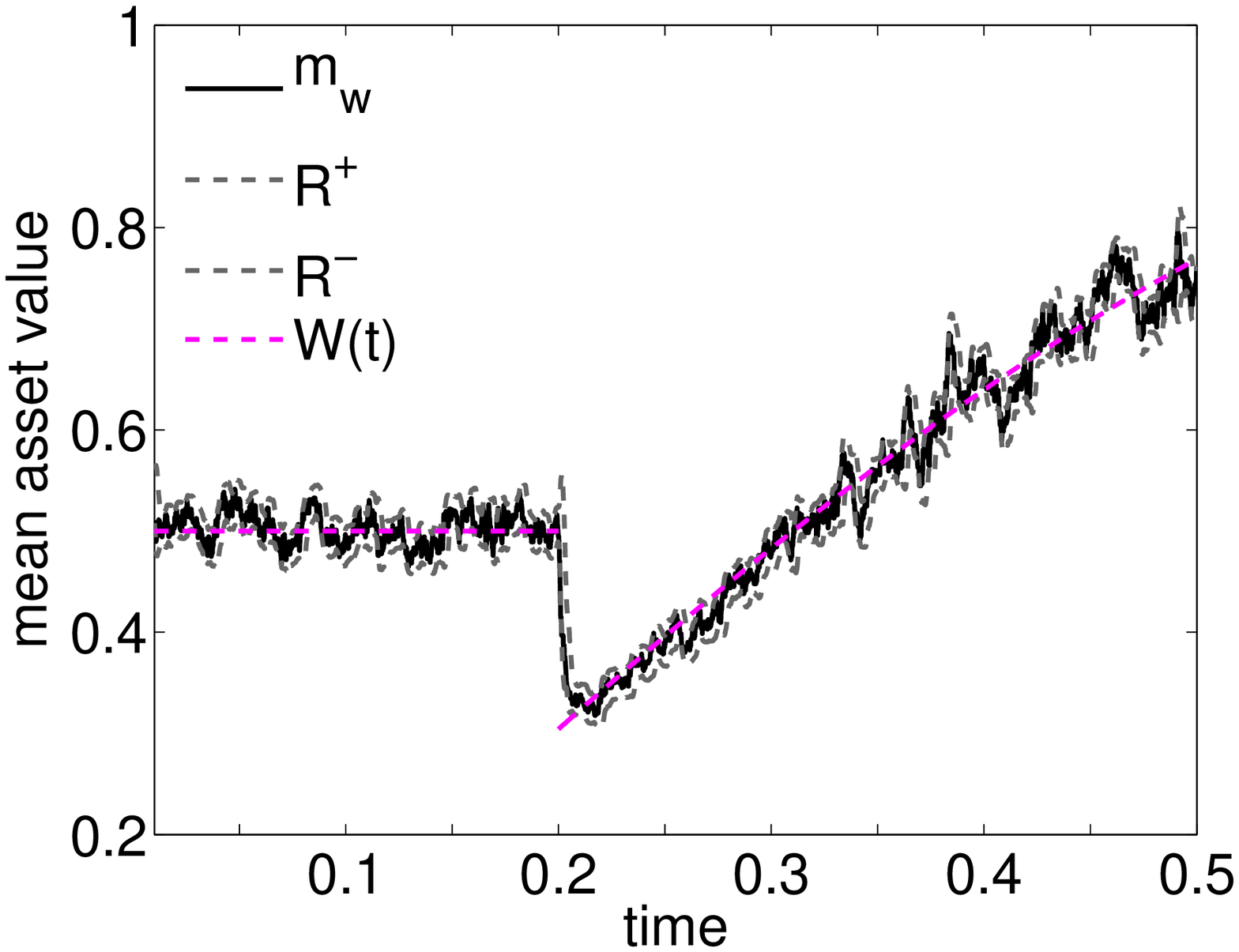}
\includegraphics[width=80mm]{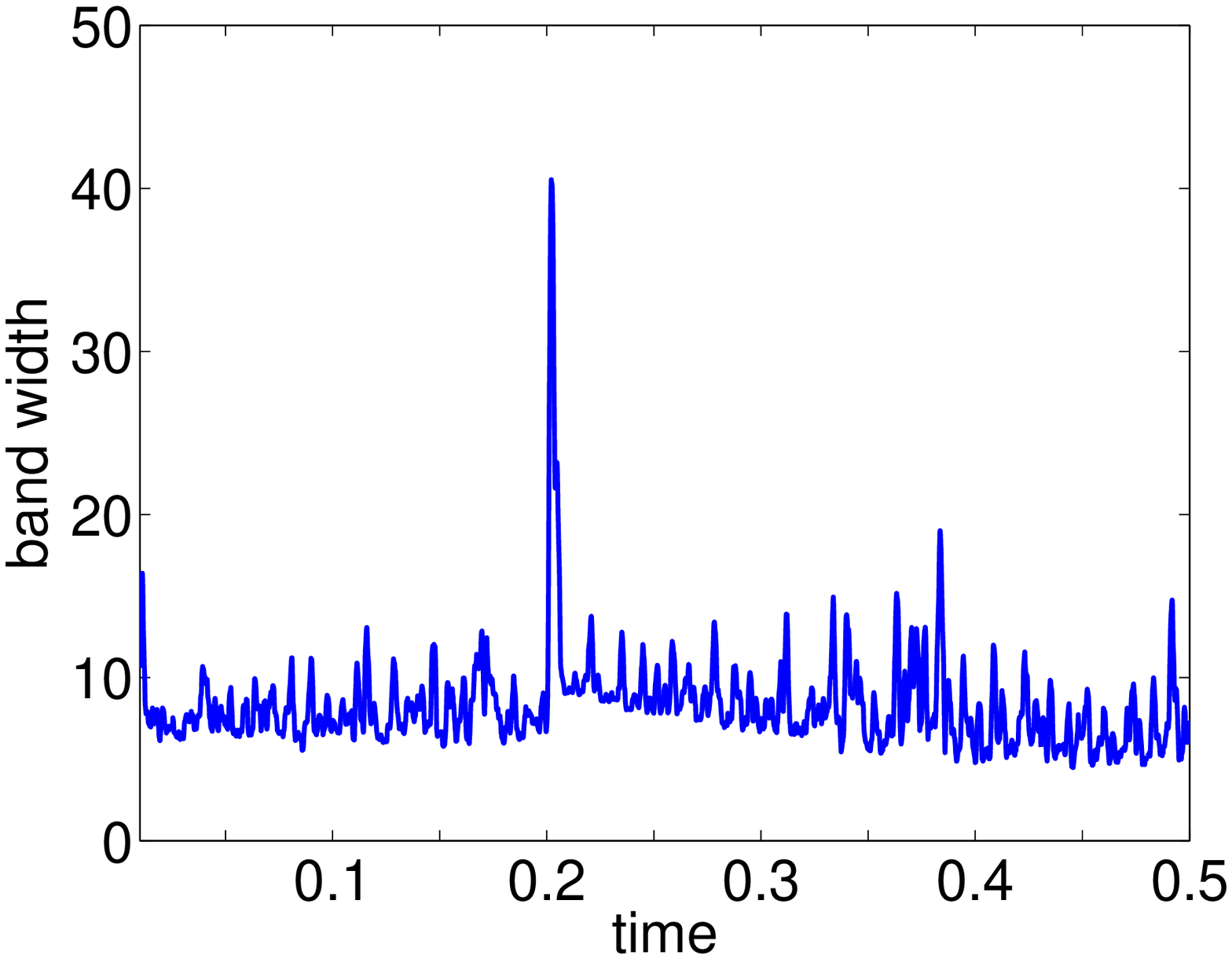}
\includegraphics[width=80mm]{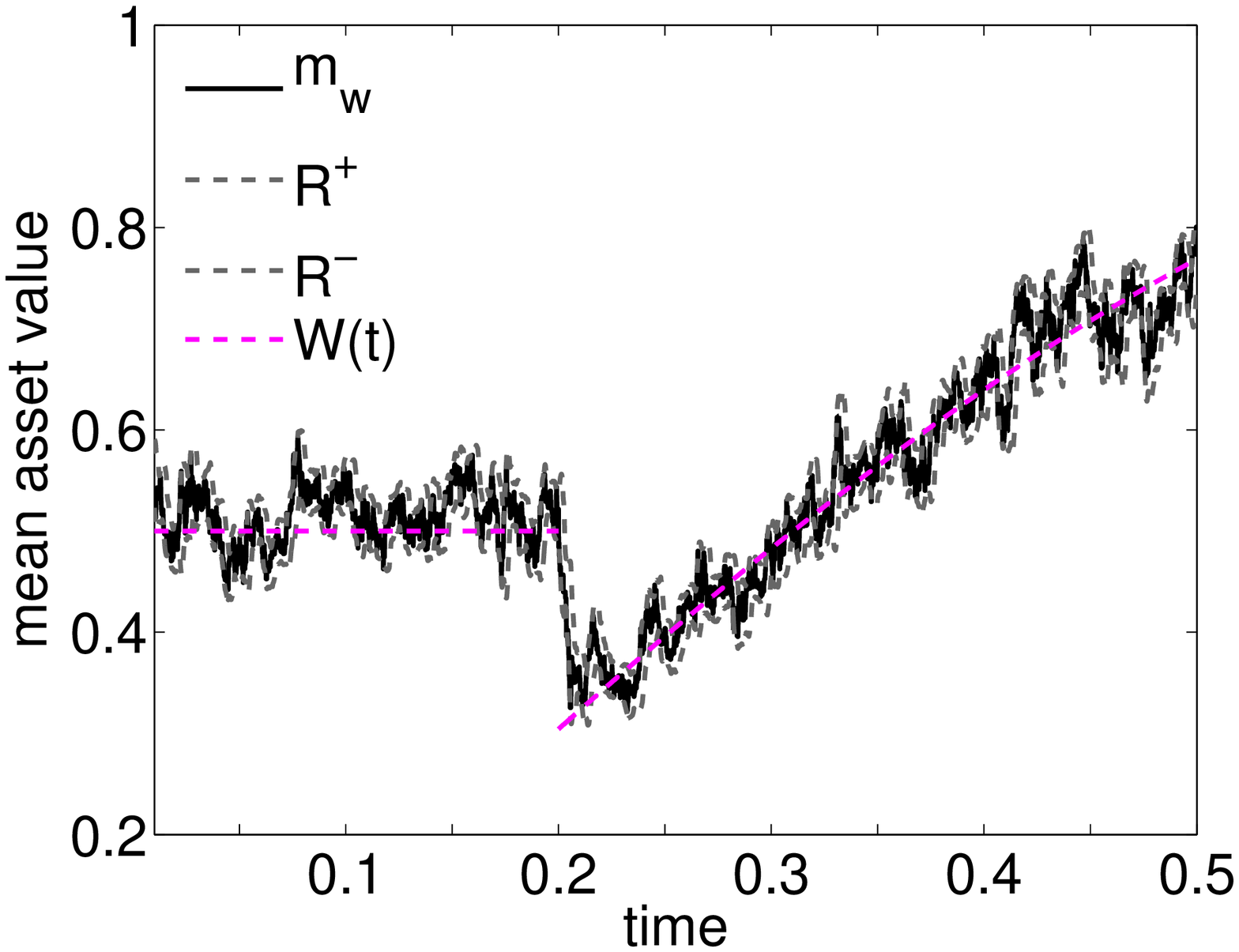}
\includegraphics[width=80mm]{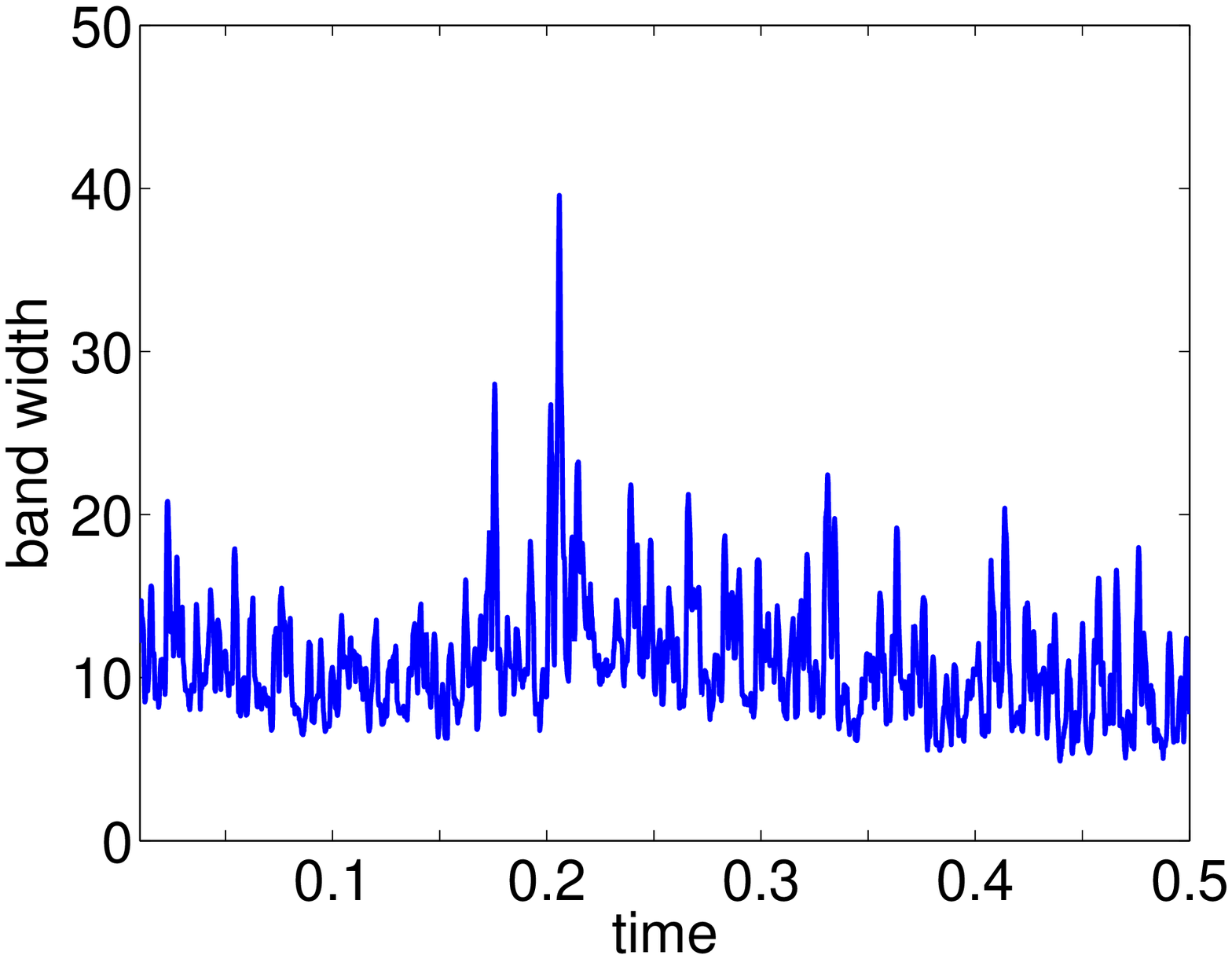}
\caption{Mean asset value $m_w(f(t))$ (left column) and Bollinger bands 
$R^\pm(t)$ (right column) versus time. The function $W(t)$ has a jump at $t=0.2$. The
parameters are $\alpha=0.05$, $\beta=0.25$, $R=0.025$, $\delta=\kappa=1$.
Upper row: $\eta=\pm 0.06$, lower row: $\eta=\pm 0.18$.}
\label{fig.boll2}
\end{figure}

%%%%%%%%%%%%%%%%%%%%%%%%%%%%%%%%%%%%%%%%%%%%%%%%%%%%%%%%%%%%%%%%%%%%%%%%%%%%%

\end{document}